 \def\frk{\mathfrak}               
 \def\mm{{\frk m}}
 \def\G{{\mathcal G}}
 \def\xb{{\mathbf x}}
 \def\fb{{\mathbf f}}
 \def\sb{{\mathbf s}}
 \def\opn#1#2{\def#1{\operatorname{#2}}} 
 \opn\chara{char} \opn\length{\ell} \opn\pd{pd} \opn\rk{rk}
 \opn\projdim{proj\,dim} \opn\injdim{inj\,dim} \opn\rank{rank}
 \opn\depth{depth} \opn\grade{grade} \opn\height{height}
 \opn\embdim{emb\,dim} \opn\codim{codim}
 \opn\Tr{Tr} \opn\bigrank{big\,rank}
 \opn\superheight{superheight}\opn\lcm{lcm}
 \opn\trdeg{tr\,deg}
 \opn\reg{reg} \opn\lreg{lreg} \opn\ini{in} \opn\lpd{lpd}
 \opn\size{size} \opn\sdepth{sdepth}
 \opn\HS{HS}
  \opn\link{link}\opn\fdepth{fdepth}\opn\lex{lex}
 \opn\tr{tr}
 \opn\type{type}
 \opn\gap{gap}
 \opn\arithdeg{arith-deg}
 \opn\revlex{revlex}
 \opn\div{div} \opn\Div{Div} \opn\cl{cl} \opn\Cl{Cl}
 \opn\Spec{Spec} \opn\Supp{Supp} \opn\supp{supp} \opn\Sing{Sing}
 \opn\Ass{Ass} \opn\Min{Min}\opn\Mon{Mon}
 \opn\Ann{Ann} \opn\Rad{Rad} \opn\Soc{Soc}
 \opn\Im{Im} \opn\Ker{Ker} \opn\Coker{Coker} \opn\Am{Am}
 \opn\Hom{Hom} \opn\Tor{Tor} \opn\Ext{Ext} \opn\End{End}
 \opn\Aut{Aut} \opn\id{id}
 \opn\nat{nat}
 \opn\pff{pf}
 \opn\Pf{Pf} \opn\GL{GL} \opn\SL{SL} \opn\mod{mod} \opn\ord{ord}
 \opn\Gin{Gin} \opn\Hilb{Hilb}\opn\sort{sort}
 \opn\PF{PF}\opn\Ap{Ap}
 \opn\mult{mult}
 \opn\bight{bight}
 \opn\aff{aff}
 \opn\relint{relint} \opn\st{st}
 \opn\lk{lk} \opn\cn{cn} \opn\core{core} \opn\vol{vol}  \opn\inp{inp} \opn\nilpot{nilpot}
 \opn\link{link} \opn\star{star}\opn\lex{lex}\opn\set{set}
 \opn\width{wd}
 \opn\Fr{F}
 \opn\QF{QF}
 \opn\G{G}
 \opn\type{type}\opn\res{res}
 \opn\conv{conv}
 \opn\Deg{Deg}
 \opn\Sym{Sym}
 \opn\gr{gr}
 \def\pot#1#2{#1[\kern-0.28ex[#2]\kern-0.28ex]}
 \opn\dirlim{\underrightarrow{\lim}}
 \opn\inivlim{\underleftarrow{\lim}}
 \let\iso=\cong
 \def\iso{\cong}
 \def\rank{\operatorname{rank}}
 \def\reg{\operatorname{reg}}
 \def\sort{\operatorname{sort}}
 \def\ini{\operatorname{in}}
 \newtheorem{Theorem}{Theorem}[section]
 \newtheorem{Corollary}[Theorem]{Corollary}
 \newtheorem{Proposition}[Theorem]{Proposition}
 \newtheorem{Conjecture}[Theorem]{Conjecture}
 \theoremstyle{definition}
\let\to=\rightarrow
 \let\To=\longrightarrow
 \def\Implies{\ifmmode\Longrightarrow \else
         \unskip${}\Longrightarrow{}$\ignorespaces\fi}
 \def\implies{\ifmmode\Rightarrow \else
         \unskip${}\Rightarrow{}$\ignorespaces\fi}
 \def\iff{\ifmmode\Longleftrightarrow \else
         \unskip${}\Longleftrightarrow{}$\ignorespaces\fi}
\title{On the Rees algebras of $t$-path ideals of cycles}
\author{Oleksandra Gasanova, Jürgen Herzog, Jiawen Shan}
\begin{document}

\begin{abstract}
In this paper we explore certain properties of the Rees algebra of $I_t(C_n)$, the $t$-path ideal of an $n$-cycle. Our main focus is on the cases when such ideals are of fiber type.    
\end{abstract}
\maketitle
\section{Introduction}

Let $G$ be a finite simple graph with the vertex set $V=\{x_1, \ldots, x_n\}$ and let $1\leq t\leq n$ be an integer. A sequence $\{x_{i_1}, x_{i_2},\ldots,x_{i_t}\}$ is called a \textit{$t$-path} of  $G$ if $\{x_{i_1}, x_{i_2}\}, \{x_{i_2}, x_{i_3}\},\ldots, \{x_{i_{t-1}}, x_{i_t}\}$ are distinct edges of $G$. The \textit{$t$-path ideal} $I_t(G)$ associated to $G$ is the squarefree monomial ideal $$I_t(G)=(x_{i_1}\cdots x_{i_t}\mid\{x_{i_1}, \ldots, x_{i_t}\} \textrm{~is~a}~t\textrm{-path~of}~G)$$ in the polynomial ring $S=K[x_1,\ldots,x_n]$.

Note that if we set $t=2$, we recover the definition of the edge ideal, defined and studied by Villareal in \cite{Vil90}. Here it was also proven that for a connected graph $G$ the ideal $I_2(G)$ is of linear type if and only if $G$ is a tree or has a unique cycle of odd length. The notion of a path ideal as defined above was first introduced by Conca and De Negri in \cite{CD99} in 1999 as a generalisation of the notion of an edge ideal. The authors also showed that if $G$ is a tree, then $I_t(G)$ is of linear type and $R(I_t(G))$ is normal and Cohen-Macaulay. 
In \cite{Bd01}  Brumatti and da Silva focused on the case when $G=C_n$ is a cycle, and discussed in which cases the ideal $I_t(C_n)$ is of linear type. They showed that 
\begin{enumerate}
      \item If $t=n-1$, then $I_t(C_n)$ is of linear type;
      \item If $n$ is odd and $t=n-2$ or $t=\frac{n-1}{2}$, then $I_t(C_n)$ is of linear type;
      \item If $gcd(n,t)>1$, then $I_t(C_n)$ is not of linear type;
      \item If $gcd(n,t)=1$
      and $[\frac{n-1}{2}]<t\le n-3$, then $I_t(C_n)$ is not of linear type;
      \item If $gcd(n,t)=1$, $tl\equiv 1 \pmod n$ and $1<t,l\le[\frac{n-1}{2}]$, then $I_t(C_n)$ is not of linear type;
      \item If $gcd(n,t)=1$, $tl\equiv 1 \pmod n$, $1<t\le[\frac{n-1}{2}]$, $[\frac{n-1}{2}]<l<n$ and $n=p(n-l)+a$, $2\leq a<n-l$, then $I_t(C_n)$ is not of linear type.
\end{enumerate}
The authors also provided references for presence of linear type in a few other cases for instance, $t=1$, $t=n$, and $t=2$ when $n$ is odd.

The next best possible situation after linear type is so-called \emph{fiber type}. Villarreal proved in \cite[Theorem~8.2.1]{Vil01} that $I_2(G)$ is of fiber type.
In \cite{HHV05} it was shown that all polymatroidal ideals are of fiber type.
This allows to prove that for instance $t$-path ideals of complete graphs are of fiber type, since they are squarefree Veronese ideals and hence polymatroidal.

In this paper we are mainly focusing on path ideals of cycles $C_n$, and study which $I_t(C_n)$ are of fiber type, but not of linear type. 
The paper is organized as follows. In Section~2 we give the necessary background: we recall the definitions of the Rees algebra, the symmetric algebra and the fiber cone of an ideal, and explain the notions of linear type and fiber type. In Section~3 we discuss the dimension and defining relations of the fiber cone $F(I_t(C_n))$. In Section~4 we compute a Gröbner basis of the defining ideal of $R(I_{n-2}(C_n))$. It allows us to conclude that $I_{n-2}(C_n)$ is of fiber type, and to compute the Hilbert series and Cohen-Macaulay type of $R(I_{n-2}(C_n))$. In Section~5 we compute a Gröbner basis of the defining ideal of $R(I_{\frac{n}{2}}(C_n))$ when $n$ is even, and conclude that $I_{\frac{n}{2}}(C_n)$ is also of fiber type. Section~6 contains a summary or results regarding linear and fiber type of $I_t(C_n)$, and a few open questions.

\section{Preliminaries and background}

In this section we provide some algebraic background and notation which will be used throughout this paper.

Let $I=(u_1,\ldots, u_m)$ be an equigenerated graded ideal of the polynomial ring $S=K[x_1,\ldots,x_n]$ over the field $K$. Let $s$ be a variable over $S$. We denote by $R(I)=S[u_1s,\ldots,u_ms]$ the Rees algebra of $I$. Then $R(I)$ is bigraded with $\deg x_i =(1,0)$ for $i=1, \ldots, n$ \and $\deg u_j s =(0,1)$ for $j=1,\ldots, m$.

Let $T=S[y_1,\ldots, y_m]$ be the polynomial ring over $S$ in the variables $y_1,\ldots, y_m$. We define a bigrading on $T$ by setting $\deg x_i=(1,0)$ for $i=1, \ldots, n$ and $\deg y_j=(0,1)$ for $j=1,\ldots, m$. Then the natural surjective bigraded $K$-algebra homomorphism $$\varphi: T\longrightarrow R(I)$$ with $\varphi(x_i)=x_i$ for $i=1,\ldots, n$ and $\varphi(y_j)=u_j s$ for $j=1,\ldots, m$ is bigraded.

Let $J=\Ker(\varphi)$ be the defining ideal of $R(I)$. One says that $I$ satisfies the \textit {$x$-condition} if there exists a monomial order $<$ on $T$ such that for any minimal monomial generator $u$  of $ \ini_<(J)$ we have $\deg_{x_i} u \leq 1$ for all $i$. The $x$-condition guarantees that all powers of $I$ have linear resolution (see \cite[Corollary~10.1.7]{HH}). Moreover, if $<$ is a product order of an arbitrary $<^{\#}$ on $B_1=\{y_1,\ldots, y_m\}$, and $<_{lex}$ induced by $x_1>x_2>\cdots>x_n$ on $B_2=\{x_1,\ldots, x_n\}$, then all powers of $I$ have linear quotients (see \cite[Theorem~10.1.9]{HH}).

Let $\alpha=(u_{ij})_{m\times r} $ be a relation matrix of $I$. Then for $j=1,\ldots, r$, the polynomials $\ell_j=\sum_{i=1}^{m}u_{ij}y_i\in \Ker(\varphi)$.  
Let $L=(\ell_1,\ldots, \ell_r)$. Then $T/L$ is isomorphic to the symmetric algebra $S(I)$. Since $L\subseteq J$, there exists a surjective bigraded $K$-algebra homomorphism $\psi: S(I)\longrightarrow R(I)$. 

The ideal $I$ is called of \emph{ linear type}, if $\psi$ is an isomorphism, that is, if $\Ker(\varphi)=L$.


In a sense the next best case after linear type is fiber type, which is defined as follows. Let $\mm=(x_1,\ldots, x_n)$ be the graded maximal ideal of $S$. The $K$-algebra $F(I)=R(I)/\mm R(I)$ is called the \emph{fiber cone} of $I$. If $I$ is equigenerated, then $F(I)\iso K[u_1,\ldots, u_m]$. 

The map $\varphi: T\longrightarrow R(I)$ induces a surjective $K$-algebra homomorphism $\varphi': T/\mm T \longrightarrow R(I)/\mm 
R(I).$ Since $T/\mm T\cong K[y_1,\ldots, y_m]$, the map $\varphi'$ may be identified with $$K[y_1,\ldots, y_m] \longrightarrow K[u_1,\ldots, u_m], \quad y_i\mapsto u_i.$$

The elements in $H=\Ker(\varphi')$ are called \emph{the fiber relations} of $I$. They are polynomials in $R=K[y_1,\ldots,y_m]$.

 Obviously, $HT\subseteq J$ and therefore, $L+HT\subseteq J$. The ideal is called of \emph{fiber type}, if $J=L+ HT$. Note that 
$
\text{linear type} \Rightarrow \text{fiber type} \Rightarrow \text{general case},
$
and none of the implications can be reversed.

In this paper we are mainly focusing on path ideals of cycles $C_n$, and study which $I_t(C_n)$ are of fiber type, but not of linear type. Throughout the paper, we consider $1<t<n-1$,  since the case $t=n$ is trivial, and $I_t(C_n)$ are of linear type when $t=1$ and $t=n-1$. We have $m=n$ since the number of $t$-paths equals the number of vertices, and the homomorphism $\varphi$ defined above will map $y_j$ to $x_j\cdots x_{j+t-1}s$. Since we are working with an $n$-cycle, the indices of all the variables (both $x_i$ and $y_j$) will be taken modulo $n$. In particular, we will interchangeably use $x_0$ and $x_n$ ($y_0$ and $y_n$) to mean the same thing.

\section{Dimension and relations of $F(I_t(C_n))$}
 In this section we focus on some properties of the fiber cone of the path ideals of cycles.

\begin{Theorem}\label{gcd}
Let $d=\gcd(n,t)$. Then $\dim F(I_t(C_n))=n-d+1$. In particular, $F(I_t(C_n))$ is a polynomial ring if and only if $\gcd(n,t)=1$.
\end{Theorem}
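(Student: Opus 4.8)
The plan is to exploit the identification recalled in Section~2: since $I_t(C_n)$ is equigenerated, $F(I_t(C_n))\iso K[u_1,\ldots,u_n]$, where $u_j=x_jx_{j+1}\cdots x_{j+t-1}$ with indices read mod $n$. Let $a_j\in\{0,1\}^n\subseteq\ZZ^n$ denote the exponent vector of $u_j$, so that $(a_j)_k=1$ exactly when $k\equiv j,j+1,\ldots,j+t-1\pmod n$. Then $F(I_t(C_n))$ is the affine semigroup ring of $\mathcal S=\NN a_1+\cdots+\NN a_n$, so its Krull dimension equals the rank of the subgroup $\sum_j\ZZ a_j\subseteq\ZZ^n$, equivalently the rank of the integer matrix $A=(a_1\mid\cdots\mid a_n)$; this rank does not depend on $K$, so one may compute it over any field, in particular over $\CC$.

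The key point is that $A$ is the $n\times n$ circulant matrix attached to the polynomial $f(x)=1+x+\cdots+x^{t-1}=(x^t-1)/(x-1)$, since $(a_j)_k$ depends only on $k-j\bmod n$. Recall that over $\CC$ the eigenvalues of such a circulant are $f(1),f(\omega),\ldots,f(\omega^{n-1})$, where $\omega=e^{2\pi i/n}$, with linearly independent Vandermonde eigenvectors; hence $\rank A$ equals the number of $j\in\{0,\ldots,n-1\}$ with $f(\omega^j)\ne 0$. Now $f(1)=t\ne 0$, and for $j\ne 0$ one has $\omega^j\ne 1$, so $f(\omega^j)=0$ iff $\omega^{jt}=1$, i.e.\ $n\mid jt$. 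Writing $d=\gcd(n,t)$ and $n=dn'$, this is equivalent to $n'\mid j$, and among $j\in\{0,1,\ldots,n-1\}$ there are exactly $d$ such multiples of $n'$, namely $0,n',\ldots,(d-1)n'$, of which only $j=0$ is excluded. Thus $f$ vanishes at exactly $d-1$ of the $n$-th roots of unity, and so $\dim F(I_t(C_n))=\rank A=n-(d-1)=n-d+1$. (One can also avoid $\CC$: since $a_{j+1}-a_j=e_{j+t}-e_j$, one has $\langle a_1,\ldots,a_n\rangle_\QQ=\QQ a_1+W$ with $W=\langle e_k-e_{k+t}:k\in\ZZ/n\rangle_\QQ$; the shift $k\mapsto k+t$ on $\ZZ/n$ has $d$ orbits of size $n/d$, on each of which these differences span the zero-sum hyperplane, so $\dim W=n-d$, while $a_1\notin W$ because its entries are nonnegative with total sum $t>0$. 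This again gives rank $n-d+1$.)

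For the last assertion, $F(I_t(C_n))$ is a standard graded $K$-algebra minimally generated by the $n$ pairwise distinct degree-one elements $u_1,\ldots,u_n$; such an algebra is a polynomial ring exactly when these generators are algebraically independent, i.e.\ when its Krull dimension is $n$. By the formula just proved this occurs precisely when $d=\gcd(n,t)=1$. I do not foresee a genuine obstacle here; the points needing care are the field-independence of the Krull dimension (which legitimizes passing to $\CC$) and the elementary number-theoretic bookkeeping in counting the root-of-unity zeros of $f$.
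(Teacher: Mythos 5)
Your proof is correct and follows essentially the same route as the paper: both reduce the statement to computing the rank of the circulant exponent matrix attached to $f(x)=1+x+\cdots+x^{t-1}$ and then count which $n$-th roots of unity are zeros of $f$, arriving at $d-1$ of them. The only differences are that you prove the circulant rank formula directly by diagonalizing over $\CC$ where the paper cites Ingleton's result, and your parenthetical orbit argument with the differences $e_{k+t}-e_k$ is a nice self-contained alternative that avoids roots of unity altogether.
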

\begin{proof}
 
It is known that the dimension of a toric ring equals the rank of the matrix whose column vectors are the exponent vectors of the monomial generators of the algebra. In our case, 
 
$$A=\begin{pmatrix}
c_0 & c_{n-1} & \ldots & c_2 & c_1\\
c_1 & c_0 & \ldots & c_3 & c_2\\
\vdots & \vdots & ~~ & \vdots & \vdots & \\
c_{n-2} & c_{n-3} & \ldots & c_0 & c_{n-1}\\
c_{n-1} & c_{n-2} & \ldots
& c_1 & c_0
\end{pmatrix},$$
where $c_0=c_1=\cdots=c_{t-1}=1$ and $c_{t}=c_{t+1}=\cdots=c_{n-1}=0$.
This is a so-called \textit {circulant} matrix, and by \cite{circulant} its rank equals $n-\deg \gcd(f(x), x^n-1)$, where $f(x)=c_0+c_1x+\cdots+c_{n-1}x^{n-1}$. In our case, $$f(x)=1+x+\cdots+x^{t-1}=\prod_{\substack{\xi^t=1\\
\xi\not=1}}(x-\xi)$$ and

$$x^n-1=\prod_{{\xi^n=1}}(x-\xi).$$
Therefore, our goal is to show that these polynomials have exactly $d-1$ roots in common. Equivalently, our goal is to show that $x^t-1$ and $x^n-1$ have exactly $d$ roots in common. On the one hand, every $d$th root of $1$ is also a $n$th and a $t$th root of $1$, so we have at least $d$ common roots. On the other hand, if $\xi$ is both a $n$th and a $t$th root of $1$, it has to be a $d$th root of $1$. Indeed, $d=k_1n+k_2t$ for some integers $k_1,k_2$, which then implies $\xi^d=\xi^{k_1n+k_2t}=\xi^{k_1n}\xi^{k_2t}=1$. Therefore, we can not have more than $d$ common roots. We conclude that the two polynomials have exactly $d$ common roots, as desired.
\end{proof}

\begin{Theorem}
\label{fiber}
Consider the $K$-algebra homomorphism $\varphi': R=K[y_1,\ldots,y_n]\To F(I_t(C_n))$ given by $\varphi'(y_i)= x_ix_{i+1}\cdots x_{i+t-1}$. Let $d=\gcd(n,t)$ and let $$m_i=\prod_{\substack{1\le j\le n\\j\equiv i\pmod d}}y_j$$
for $i=1,\ldots, d$. Then the defining ideal $H$ of $F(I_t(C_n))$  coincides with the ideal $H'=(m_1-m_d,m_2-m_d,\ldots, m_{d-1}-m_d)$.

\end{Theorem}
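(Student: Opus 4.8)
The plan is to establish $H'\subseteq H$ by a direct computation and then to deduce $H\subseteq H'$ from the facts that $H'$ is prime and $\dim R/H'=n-d+1$; since $\dim R/H=n-d+1$ by Theorem~\ref{gcd} and $H$ is prime, a prime properly contained in $H$ would have strictly larger coheight, so these two facts force $H'=H$.

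For $H'\subseteq H$ I would compute $\varphi'(m_i)$. Because $d\mid n$ and $d\mid t$, any window of $t$ consecutive residues modulo $n$ contains exactly $t/d$ residues lying in a fixed class modulo $d$; hence in $\varphi'(m_i)=\prod_{j\equiv i\,(d)}x_jx_{j+1}\cdots x_{j+t-1}$ each variable $x_k$ occurs with multiplicity exactly $t/d$, independently of $i$. Thus $\varphi'(m_i)=(x_1\cdots x_n)^{t/d}$ for every $i$, so $\varphi'(m_i-m_d)=0$ and $H'\subseteq H$. Note also $\varphi'(m_i)\neq0$, so $m_i\notin H$; this will be used below.

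Next I would identify $\ini_<(H')$. For any monomial order with $m_i\succ m_d$ for $i<d$ (such orders exist, e.g.\ weight orders giving the class-$d$ variables weight $0$), the generators $m_1-m_d,\dots,m_{d-1}-m_d$ form a Gröbner basis of $H'$: the monomials $m_1,\dots,m_d$ have pairwise disjoint supports, so $\lcm(m_i,m_{i'})=m_im_{i'}$ and the $S$-polynomial of $m_i-m_d$ and $m_{i'}-m_d$ equals $m_d\bigl((m_i-m_d)-(m_{i'}-m_d)\bigr)$, which reduces to $0$. Hence $\ini_<(H')=(m_1,\dots,m_{d-1})$, a complete intersection of height $d-1$ (a regular sequence of monomials in disjoint variables). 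It follows that $\dim R/H'=n-d+1$ and that $R/H'$ is Cohen--Macaulay, so in particular $R/H'$ has no embedded primes.

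It remains to show $H'$ is prime, which I expect to be the main obstacle. For $i=1,\dots,d-1$ let $v_i\in\ZZ^n$ have entry $+1$ at the variables dividing $m_i$, entry $-1$ at the variables dividing $m_d$, and $0$ elsewhere, so that $m_i-m_d=\yb^{v_i^{+}}-\yb^{v_i^{-}}$ is the binomial attached to $v_i$; put $L=\ZZ v_1+\cdots+\ZZ v_{d-1}$. First, no minimal prime $\pp$ of $H'$ contains $m_1$: if it did, then modulo $\pp\supseteq H'$ all $m_i$ coincide, so $(m_1,\dots,m_d)\subseteq\pp$, and a prime over that ideal must contain one variable from each of the $d$ residue classes, whence $\height\pp\ge d$ --- impossible for a minimal prime of the $(d-1)$-generated ideal $H'$. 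Hence $m_1$ lies outside every associated prime of $R/H'$ (there being no embedded ones), so it is a nonzerodivisor, $R/H'$ embeds into $(R/H')[m_1^{-1}]$, and it suffices to show this localization is a domain. There every $m_i$ equals the unit $m_1$, so every $y_j$ --- a divisor of $m_i$ for its residue class --- is a unit, whence $(R/H')[m_1^{-1}]$ is the quotient of $K[y_1^{\pm1},\dots,y_n^{\pm1}]=K[\ZZ^n]$ by the ideal $(\yb^{v_i}-1\mid 1\le i\le d-1)$, i.e.\ $(R/H')[m_1^{-1}]\cong K[\ZZ^n/L]$. The key point is then that $\ZZ^n/L$ is torsion-free: if $kw\in L$ for some $w\in\ZZ^n$ and $k\ge1$, writing $kw=\sum c_iv_i$ shows $kw$ is constant on each residue class (with class-$i$ value $c_i$ for $i<d$ and class-$d$ value $-\sum_{i<d}c_i$), which forces $k\mid c_i$ and hence $w=\sum(c_i/k)v_i\in L$. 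Since the $v_i$ are linearly independent, $\ZZ^n/L$ is then free of rank $n-d+1$, so $K[\ZZ^n/L]$ is a Laurent polynomial ring, in particular a domain. Therefore $R/H'$ is a domain, $H'$ is prime with $\dim R/H'=n-d+1$, and by the opening remark $H'=H$.
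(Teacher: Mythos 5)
Your proposal is correct and follows the same overall strategy as the paper: establish $H'\subseteq H$ by computing $\varphi'(m_i)=(x_1\cdots x_n)^{t/d}$, then show $H'$ is prime of height $d-1$ (via a Gr\"obner basis with pairwise coprime leading terms, followed by a localization) and conclude from the containment of primes of equal height. The two sub-steps of the primality argument are executed differently, however. For the nonzerodivisor justifying the localization, the paper reads off from the squarefree initial ideal $(m_1,\dots,m_{d-1})$ that the variables dividing $m_d$ are nonzerodivisors and then invokes symmetry, whereas you use that $H'$ is unmixed (being a complete intersection) together with a height count showing that no minimal prime of $H'$ can contain all the $m_i$; both are valid, and yours has the mild advantage of inverting the single element $m_1$ rather than a product of many variables. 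For the localized ring, the paper explicitly eliminates $y_1,\dots,y_{d-1}$ to exhibit it as the Laurent polynomial ring $K[y_d^{\pm1},\dots,y_n^{\pm1}]$, while you identify it with the group algebra $K[\ZZ^n/L]$ of the quotient by the lattice $L$ spanned by the exponent vectors and check that $\ZZ^n/L$ is torsion-free. Your route is the standard lattice-ideal (saturation implies prime) argument and generalizes readily; the paper's is more elementary and self-contained. Both reach the same conclusion, so the proof is sound.
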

\begin{proof}
We may assume that $d>1$, otherwise the theorem follows directly from \Cref{gcd}. Let $n=n'd$ and $t=t'd$. Note that $\varphi'(m_i)=(x_1\cdots x_n)^{t'}$ for all $i$. 
We will prove this for $i=1$, the proof for the other $i$ is analogous.
Let 
$$v_1=x_1\cdots x_{d}, v_2=x_{d+1}\cdots x_{2d},\ldots,v_{n'}=x_{n-d+1}\cdots x_n.$$ In other words, $v_j$ encodes a $d$-path ending at $dj$ for all $j=1,\ldots, n'$.
Then we have
\begin{align*}
\varphi'(y_1)&=v_1v_2\cdots v_{t'-1}v_{t'},\\
\varphi'(y_{d+1})&=v_2v_3\cdots v_{t'}v_{t'+1},\\
\vdots\\
\varphi'(y_{n-2d+1})&=v_{n'-1}v_{n'}\cdots v_{t'-3}v_{t'-2},\\
\varphi'(y_{n-d+1})&=v_{n'}v_{1}\cdots v_{t'-2}v_{t'-1}.
\end{align*}
Therefore, $\varphi'(m_1)$ is the product of all of these expressions. We can see that the product of each column equals $v_1v_2\cdots v_{n'}=x_1\cdots x_n$, and there are $t'$ columns in total, which implies $\varphi'(m_1)=(x_1\cdots x_n)^{t'}$. This shows  that $m_i-m_d\in H$ for $i=1,\ldots,d-1$. Therefore, $H'\subseteq H$. \Cref{gcd} implies that $\height H=d-1$, therefore, it is enough to show that $H'$ is a prime ideal of height $d-1$. The initial terms of the set $\mathcal{G}  =\{m_1-m_d,m_2-m_d,\ldots, m_{d-1}-m_d\}$   with respect to the lexicographical order $<$ are the monomials $m_1,\ldots, m_{d-1}$. Since these monomials are pairwise coprime, it follows that $\mathcal{G}$ is a Gr\"obner basis of $H'$, and hence  $\ini_{<}(H')=(m_1,\ldots, m_{d-1})$. Therefore, $\height H'=\height \ini_{<}(H')=d-1$. From this Gröbner basis we also see that all variables dividing $m_d$ are non-zerodivisors modulo $\ini_{<}(H')$, and therefore modulo $H'$. Since $H'=(m_i-m_j\mid 1\le i<j\le n)$, we obtain by symmetry that all $y_i$ are non-zerodivisors modulo $H'$.

Let $y=y_d\cdots y_{n}$. Then $y$ is a non-zerodivisor modulo $H'$. 
Therefore, the natural map $R/H'\To R_y/H'R_y$ is injective. We will show that $R_{y}/H'R_{y}$ is a domain. This will then imply that $R/H'$ is a domain, as desired. We know that $R_y=K[y_1,\ldots, y_{d-1},y_d^{\pm1},\ldots,y_n^{\pm1}]$ and that $H'R_y$ is generated by all the $m_i-m_d=y_iy_{d+i}\cdots y_{n-d+i}-y_dy_{2d}\cdots y_{n}$, $i=1,\ldots, d-1$. Equivalently, we can say that $H'R_y$ is generated by all the $y_i-(y_dy_{2d}\cdots y_{n}) \cdot (y_{d+i}\cdots y_{n-d+i})^{-1}$, $i=1,\ldots, d-1$. This implies that $R_y/H'R_y\iso K[y_d^{\pm 1}, y_{d+1}^{\pm 1},\ldots, y_n^{\pm 1}]$, which finishes the proof.
\end{proof}

\section{The structure of the Rees ring of $I_{n-2}(C_n)$}
\label{sec:n-2}
Let $n\geq 3$ be a integer and let $L$ be the defining ideal of the symmetric algebra $S(I_{n-2}(C_n))$.
By \cite[Proposition~2.5]{Bd01}  we know that $L=(f_1,\ldots, f_{n-1},g_1)$, where $f_j={x_{j-2}y_j}-x_jy_{j+1}$ for $j=1,\ldots,n-1$ and $g_1=x_0y_1-x_{n-2}y_0$. Note that if we extend the definition of $f_j$ to $j=n$, we will obtain $g_1=-f_n$. However, for reasons that will become clear later, it is not convenient to make such an extension for Gröbner bases computations. For all other purposes we may say $L=(f_1,\ldots, f_n)$.
As a preparation for a computation of a Gröbner basis for $L$ (and then later for $J$, the defining ideal of $R(I_{n-2}(C_n))$), we extend the set of its generators.

\begin{Proposition}
\label{linear relations}
$L=(f_1,\ldots,f_{n-1},g_1,\ldots,g_{\lfloor\frac{n}{2}\rfloor}),$
where 
$$g_k={x_{2k-2}\prod_{\substack{i\in [0,2k)\\i \text{ odd}}}y_i}-x_{n-2}\prod_{\substack{i\in [0,2k)\\i \text{ even}}}y_i~~for ~~k=1,\ldots, \lfloor \frac{n}{2}\rfloor.$$
\end{Proposition}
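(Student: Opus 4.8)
The plan is to show the two ideals $L = (f_1,\dots,f_{n-1},g_1)$ and $L' := (f_1,\dots,f_{n-1},g_1,\dots,g_{\lfloor n/2\rfloor})$ coincide. The inclusion $L \subseteq L'$ is immediate since $g_1$ is among the listed generators of $L'$, so the real content is the reverse inclusion: each $g_k$ for $k = 2,\dots,\lfloor n/2\rfloor$ lies in $L$. Since $L$ already contains $g_1$ and all the $f_j$, it suffices to express $g_k$ as an explicit combination $g_k = \lambda g_{k-1} + (\text{combination of } f_j\text{'s})$, and then induct on $k$ starting from the base case $g_1 \in L$.

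The key computation is therefore to find, for each $k \ge 2$, a linear syzygy relating $g_{k-1}$, $g_k$ and the $f_j$. Writing $P_k^{\mathrm{odd}} = \prod_{i \in [0,2k),\, i \text{ odd}} y_i$ and $P_k^{\mathrm{even}} = \prod_{i \in [0,2k),\, i \text{ even}} y_i$, so that $g_k = x_{2k-2} P_k^{\mathrm{odd}} - x_{n-2} P_k^{\mathrm{even}}$, the natural move is to multiply $g_{k-1} = x_{2k-4} P_{k-1}^{\mathrm{odd}} - x_{n-2} P_{k-1}^{\mathrm{even}}$ by an appropriate $y$-variable to match one of the two terms of $g_k$. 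Since $P_k^{\mathrm{even}} = P_{k-1}^{\mathrm{even}} \cdot y_{2k-2}$, multiplying $g_{k-1}$ by $y_{2k-2}$ produces the term $-x_{n-2} P_k^{\mathrm{even}}$ that appears in $g_k$; it remains to check that the leftover difference
\[
g_k - (-1)\cdot\bigl(\,\text{something}\,\bigr) = x_{2k-2} P_k^{\mathrm{odd}} - x_{2k-4}\, y_{2k-2}\, P_{k-1}^{\mathrm{odd}}
\]
is a multiple of $f_{2k-2}$ (recall $f_{2k-2} = x_{2k-4}\, y_{2k-2} - x_{2k-2}\, y_{2k-1}$), after noting that $P_k^{\mathrm{odd}} = P_{k-1}^{\mathrm{odd}} \cdot y_{2k-1}$. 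Indeed $x_{2k-2} P_{k-1}^{\mathrm{odd}} y_{2k-1} - x_{2k-4}\, y_{2k-2}\, P_{k-1}^{\mathrm{odd}} = -P_{k-1}^{\mathrm{odd}} \cdot f_{2k-2}$, so one gets the clean identity
\[
g_k = -\,y_{2k-2}\, g_{k-1} \;-\; P_{k-1}^{\mathrm{odd}}\cdot f_{2k-2}
\]
(up to a sign I would pin down carefully), valid for $2 \le k \le \lfloor n/2\rfloor$. This shows $g_k \in (g_{k-1}, f_{2k-2}) \subseteq L$ by induction, giving $L' \subseteq L$.

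The one place demanding care — and the main (mild) obstacle — is the index arithmetic at the boundary: the indices live modulo $n$, the products $P_k^{\mathrm{odd}}$, $P_k^{\mathrm{even}}$ run over $i \in [0,2k)$, and one must verify that $2k-2 \le n-1$ so that the generator $f_{2k-2}$ is genuinely among $f_1,\dots,f_{n-1}$ (not the excluded $f_n = -g_1$) for all $k$ in range, and that the parity bookkeeping $[0,2k) = [0,2k-2) \sqcup \{2k-2, 2k-1\}$ correctly assigns the two new variables $y_{2k-2}$ (even) and $y_{2k-1}$ (odd). For $k \le \lfloor n/2 \rfloor$ we have $2k-2 \le n-2 < n$, so $f_{2k-2}$ is fine; when $n$ is even and $k = n/2$ the new odd index is $2k-1 = n-1$ and the even index is $n-2$, still within range, and when $n$ is odd the top case $k = (n-1)/2$ introduces indices $n-3, n-2$, again fine. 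So the induction goes through cleanly, and combined with $g_1 \in L$ this proves $L = L'$.
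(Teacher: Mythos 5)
Your proposal is correct and follows essentially the same route as the paper: an induction on $k$ in which $g_k$ is recovered from $y_{2k-2}\,g_{k-1}$ together with a $P_{k-1}^{\mathrm{odd}}$-multiple of $f_{2k-2}$. The only slip is the sign in your assembled identity --- it should read $g_k = y_{2k-2}\,g_{k-1} - P_{k-1}^{\mathrm{odd}}\, f_{2k-2}$ --- which you already flagged and which does not affect the ideal-membership conclusion.
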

\begin{proof}

  We will show by induction that $g_k\in L= (f_1,\ldots,f_{n-1},g_1)$ for all $k=1,\ldots, \lfloor\frac{n}{2}\rfloor$. The base case holds by definition. Now, modulo $L$ we have:
 \begin{align*}    
&x_{n-2}\prod_{\substack{i\in [0,2k)\\i \text{ even}}}y_i=y_{2k-2}\left(x_{n-2}\prod_{\substack{i\in [0,2k-2)\\i \text{ even}}}y_i\right)
=y_{2k-2}\left(x_{2k-4}\prod_{\substack{i\in [0,2k-2)\\i \text{ odd}}}y_i\right)\\
=&x_{2k-2}y_{2k-1}\prod_{\substack{i\in [0,2k-2)\\i \text{ odd}}}y_i
=x_{2k-2}\prod_{\substack{i\in [0,2k)\\i \text{ odd}}}y_i.
\end{align*}
Here the second equality follows from the induction hypothesis and the third equality comes from $x_{2k-4}y_{2k-2}-x_{2k-2}y_{2k-1}\in L$. 
\end{proof}

\begin{Theorem}\label{GB1}
The generators of $L$ given in \Cref{linear relations} form a Gröbner basis of $L$ with respect to any monomial order $<$ on $T$ which is the product order of $<_1$ and $<_2$, where $<_1$ is the reverse lexicographic order on $R$ induced by $y_1>y_2>\cdots > y_{n-1}>y_0$, and $<_2$ is an arbitrary monomial order on $S$.
\end{Theorem}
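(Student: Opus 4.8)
The plan is to apply Buchberger's criterion to the generating set $G=\{f_1,\dots,f_{n-1},g_1,\dots,g_{\lfloor n/2\rfloor}\}$ of $L$, which generates $L$ by \Cref{linear relations}: one must show that every $S$-polynomial of two elements of $G$ reduces to zero modulo $G$.

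First I would record the leading monomials, which is where the hypotheses on $<$ enter. Since $<$ compares $y$-monomials first by the reverse lexicographic order $<_1$ attached to $y_1>\cdots>y_{n-1}>y_0$, and the two monomials of each binomial in $G$ have distinct $y$-parts, the leading monomial is already determined by $<_1$ — which is exactly why an arbitrary $<_2$ is allowed. In $<_1$ one has $y_j>y_{j+1}$ for $j=1,\dots,n-1$ (with $y_n:=y_0$), so $\ini_<(f_j)=x_{j-2}y_j$; and for $g_k$ the $y$-monomials $\prod_{\text{odd }i<2k}y_i$ and $\prod_{\text{even }i<2k}y_i$ have disjoint supports, only the second containing $y_0$, the $<_1$-smallest variable, which in reverse lex makes the first one larger, so $\ini_<(g_k)=x_{2k-2}\prod_{\text{odd }i<2k}y_i$.

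Then I would run through the $S$-pairs in three families. For a pair $f_i,f_j$ with $i\neq j$ the leading monomials are coprime ($y_i\neq y_j$, and $x_{i-2}\neq x_{j-2}$ since $i\not\equiv j\pmod n$), so by the product criterion $S(f_i,f_j)$ reduces to zero. For a pair $f_j,g_k$ the leading monomials share a factor in exactly two situations: $j=2k$ (sharing $x_{2k-2}$) or $j=2a-1$ odd with $1\le a\le k$ (sharing $y_{2a-1}$; here one checks $x_{2a-3}\neq x_{2k-2}$). If $j=2k$, a direct computation gives $S(f_{2k},g_k)=-g_{k+1}$, which lies in $G$ provided $k+1\le\lfloor n/2\rfloor$; the only leftover, $n$ odd with $k=(n-1)/2$, is disposed of by one more reduction chain through the $f_j$'s. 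If $j=2a-1$, after the forced cancellation $S(f_{2a-1},g_k)$ collapses to a binomial $-x_{2k-2}x_{2a-1}\,\mu+x_{2a-3}x_{n-2}\,\nu$ with $\mu,\nu$ monomials in the $y$'s, and I would reduce it to zero by a telescoping chain of the $f_j$'s — the same kind of chain that proves \Cref{linear relations} — which runs around the cycle and closes up via the identification $y_n=y_0$. Finally, for a pair $g_k,g_l$ with $k<l$, cancelling the common part of the leading terms leaves, up to the monomial factor $x_{n-2}\prod_{\text{even }i<2k}y_i$, the binomial $x_{2k-2}\prod_{\text{even }2k\le i\le 2l-2}y_i-x_{2l-2}\prod_{\text{odd }2k+1\le i\le 2l-1}y_i$, which reduces to zero along the chain $f_{2k},f_{2k+2},\dots,f_{2l-2}$. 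Once all $S$-pairs are dealt with, Buchberger's criterion gives the theorem.

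I expect the real work — and the only genuine obstacle — to be organizational: one must carry out these telescoping reductions with all indices read modulo $n$, isolate the handful of boundary cases (most delicately $n$ odd with $k=\lfloor n/2\rfloor$, together with the wrap-arounds $y_n=y_0$ and $x_n=x_0$), and, crucially, check that each individual reduction \emph{strictly lowers} the leading monomial, so that the process terminates at $0$ rather than at a stuck nonzero remainder. This last point is precisely where placing $y_0$ last in $<_1$ is essential: applying $f_i$ replaces a factor $y_i$ of a monomial by the strictly $<_1$-smaller $y_{i+1}$, so the monomial drops at every step, and the chains are arranged so that the final step is an honest cancellation made possible by $y_n=y_0$.
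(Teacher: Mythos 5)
Your proposal follows essentially the same route as the paper's appendix proof: the same identification of leading terms via the reverse lexicographic order with $y_0$ last, the same three families of $S$-pairs, the observation that $S(f_{2k},g_k)=-g_{k+1}$ with the single exceptional case $2k=n-1$ handled by a chain of $f_j$-reductions, and the same telescoping reductions (descending to the case $s-1$, respectively wrapping around the cycle through $y_n=y_0$) for the remaining pairs. The outline is correct and the remaining work is, as you say, the bookkeeping of indices modulo $n$, which the paper carries out in full.
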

The proof of this fact can be found in the appendix. There, and further in this paper, the leading terms of binomials are underlined for convenience whenever a Gröbner basis or an initial ideal is being computed. 

\medskip

Recall that by \Cref{fiber} we have $H=0$ if $n$ is odd, and if $n$ is even we have $H=(h)$, where $$h=\prod_{\substack{i\in [0,n)\\i \text{ odd}}}y_i-\prod_{\substack{i\in [0,n)\\i \text{ even}}}y_i.$$ 
We will now describe a Gröbner basis of $L+HT$, which, if $n$ is even,
 will later turn out to be equal to $J$, the defining ideal of $R(I_{n-2}(C_n))$.

\begin{Corollary}
\label{GBfibers}
Let $n$ be even. 
Then the polynomials $\{f_1,\ldots,f_{n-1},g_1,\ldots,g_{\frac{n}{2}},h\}$ form a Gröbner basis for $L+HT$ with respect to any monomial order given in \Cref{GB1}.
\end{Corollary}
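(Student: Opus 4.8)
The plan is to combine the Gröbner basis of $L$ from \Cref{GB1} with the single fiber relation $h$ and verify Buchberger's criterion for the enlarged set $\Gc=\{f_1,\dots,f_{n-1},g_1,\dots,g_{n/2},h\}$ with respect to a product order as in \Cref{GB1}. Since $\Gc\setminus\{h\}$ is already a Gröbner basis of $L$, all $S$-pairs among the $f_j$ and $g_k$ reduce to zero, so the only new work is to analyze the $S$-pairs $S(h,f_j)$ and $S(h,g_k)$. Under $<_1$ (reverse lexicographic on $R$ with $y_1>\cdots>y_{n-1}>y_0$), the leading term of $h$ is $\prod_{i\text{ odd},\,0\le i<n}y_i$, a monomial purely in the $y$-variables, while the leading terms of $f_j$ and $g_k$ each contain exactly one $x$-variable (namely $x_{j-2}y_j$ and $x_{2k-2}\prod_{i\text{ odd}}y_i$, respectively, by the underlining convention).

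First I would handle $S(h,f_j)$. The leading terms are $\ini(h)=\prod_{i\text{ odd}}y_i$ and $\ini(f_j)=x_{j-2}y_j$. If $j$ is even, these are coprime in the monomial sense (no common variable other than possibly $y_j$, and $y_j$ does not divide $\ini(h)$ when $j$ is even), so the $S$-pair reduces to zero by the coprimality criterion (\cite[Lemma~3.3.1]{HH} or the analogous statement). If $j$ is odd, then $y_j$ divides both leading terms, and the $S$-polynomial is $x_{j-2}\bigl(\prod_{i\text{ even}}y_i\bigr)-\bigl(\prod_{i\text{ odd},\,i\ne j}y_i\bigr)x_jy_{j+1}$; I would reduce this using the relations $f_{j'}$ and $g_k$, aiming to land in the ideal generated by $\Gc$. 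The natural candidate is that $S(h,f_j)$ reduces via the $g_k$'s: indeed $g_{n/2}=x_{n-2}\prod_{i\text{ odd}}y_i-x_{n-2}\prod_{i\text{ even}}y_i$ is essentially $x_{n-2}h$, so the $g_k$'s already ``know'' about $h$, and one should be able to telescope.

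Next, $S(h,g_k)$: here $\ini(g_k)=x_{2k-2}\prod_{i\text{ odd},\,0\le i<2k}y_i$, which for $k<n/2$ shares the factor $\prod_{i\text{ odd},\,0\le i<2k}y_i$ with $\ini(h)$. The $S$-polynomial is $\bigl(\prod_{i\text{ odd},\,2k\le i<n}y_i\bigr)\cdot(\text{trailing term of }g_k)-x_{2k-2}\cdot(\text{trailing term of }h)$, which expands to a difference of two $y$-monomials times $x$'s; I expect this to reduce to $0$ by first applying $h$ to replace $\prod_{\text{odd}}$ by $\prod_{\text{even}}$ in one term and then recognizing a multiple of some $g_{k'}$ or of $h$ itself. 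For $k=n/2$ the pair $S(h,g_{n/2})$ has $g_{n/2}=x_{n-2}h$, so $\ini(g_{n/2})$ is a multiple of $\ini(h)$ and the pair trivially reduces. The main obstacle will be bookkeeping the telescoping reductions in the odd-$j$ case of $S(h,f_j)$ and the $k<n/2$ case of $S(h,g_k)$ — tracking which $y$-monomial appears with which sign after each reduction step — but no genuinely new idea beyond \Cref{linear relations} and \Cref{GB1} should be needed. Once Buchberger's criterion is verified, $\Gc$ is a Gröbner basis of the ideal it generates, which is $L+HT$ (as $h$ generates $HT$ over $L$), completing the proof. Since this is largely a routine (if tedious) $S$-pair check, I would relegate the full computation to the appendix, as was done for \Cref{GB1}.
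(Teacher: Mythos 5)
Your frame is the right one and matches the paper's: since $\{f_1,\dots,f_{n-1},g_1,\dots,g_{n/2}\}$ is already a Gröbner basis of $L$, only the $S$-pairs involving $h$ need checking; the coprimality criterion disposes of $S(h,f_j)$ for $j$ even; and the identity $g_{n/2}=x_{n-2}h$ is indeed the key observation. But the paper uses that identity precisely: for every generator $u$ whose $S$-pair with $h$ is nontrivial one has $S(u,g_{n/2})=x_{n-2}\,S(u,h)$, and since the reduction chains already carried out in the proof of \Cref{GB1} for the pairs $S(\cdot,g_{n/2})$ never use the factor $x_{n-2}$ in an essential way (the one step where it could matter, replacing $x_{2k-2}x_{2k-1}y_{2k}$ by $x_{n-2}x_{n-1}y_0$, becomes the identity when $2k=n$), the very same chains reduce $S(u,h)$ to zero. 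That transfer argument, or else an explicit reduction, is the entire content of the proof, and your proposal defers it to ``bookkeeping.''

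Moreover, the mechanisms you sketch for the two nontrivial families would not work as written. For $S(h,g_k)$ with $k<n/2$ you propose to ``first apply $h$''; but the $S$-polynomial is $x_{n-2}\prod_{2k\le i<n,\, i\,\mathrm{odd}}y_i\prod_{i<2k,\, i\,\mathrm{even}}y_i-x_{2k-2}\prod_{i<n,\, i\,\mathrm{even}}y_i$, and neither term is divisible by $\ini(h)=\prod_{i<n,\, i\,\mathrm{odd}}y_i$ (the first term lacks the odd-indexed $y_i$ with $i<2k$, the second contains no odd-indexed $y_i$ at all), so no reduction by $h$ is available; the actual reduction is a telescoping chain of $f_j$'s exactly as in Step~1 of the appendix proof of \Cref{GB1}. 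For $S(h,f_j)$ with $j$ odd you propose to reduce ``via the $g_k$'s''; but $\ini(g_k)=x_{2k-2}\prod_{i<2k,\, i\,\mathrm{odd}}y_i$ contains the even-indexed variable $x_{2k-2}$, which cannot divide the leading term $x_jy_{j+1}\prod_{i\,\mathrm{odd},\,i\ne j}y_i$ since $j$ is odd, so no $g_k$ applies at the first step either; again the work is done by chains of $f$'s. So while the plan is structurally sound and the statement is true, the part you leave unverified is where all the work lies, and the hints you give for it point in the wrong direction.
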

The proof of this fact can be found in the appendix.

We are now ready to describe the defining ideal $J$ of $R(I_{n-2}(C_n))$.

\begin{Theorem}
\label{Rees}
 Let as before $L=(f_1,\ldots, f_n)$, where  $f_j=x_{n-2+j}y_j-x_jy_{j+1}$ for $j=1,\ldots, n$. As before, let $H=(h)$, where $h=\prod_{i=0}^{\frac{n}{2}-1}y_{2i}- \prod_{i=0}^{\frac{n}{2}-1}y_{2i+1}$. Then $J=L$ if $n$ is odd, and $J=L+HT$ if $n$ is even. 
\end{Theorem}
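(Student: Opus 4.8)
The plan is to show that the Gröbner basis $\mathcal{G}=\{f_1,\ldots,f_{n-1},g_1,\ldots,g_{\lfloor n/2\rfloor}\}$ (respectively $\mathcal{G}\cup\{h\}$ when $n$ is even) already generates $J$, not just the subideal $L$ (respectively $L+HT$). Since we have established in \Cref{GB1} and \Cref{GBfibers} that these sets are Gröbner bases of $L$ and $L+HT$ with respect to a product order $<$ of the type in \Cref{GB1}, it suffices to prove that $\ini_<(J)=\ini_<(L)$ in the odd case, and $\ini_<(J)=\ini_<(L+HT)$ in the even case. The inclusions $L\subseteq J$ and $L+HT\subseteq J$ are automatic, so $\ini_<(L)\subseteq\ini_<(J)$ and $\ini_<(L+HT)\subseteq\ini_<(J)$; the content is the reverse inclusion.

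First I would identify $\ini_<(L)$ and $\ini_<(L+HT)$ explicitly from the underlined leading terms: the leading term of $f_j$ is $x_jy_{j+1}$ for $j=1,\ldots,n-1$ (using $<_1$ with $y_1>\cdots>y_{n-1}>y_0$, so $y_{j+1}\prec y_j$ forces the leading $y$-variable to be the one of smaller index among the two, matched against the $x$-part via the product order — this is exactly what the underlining in the appendix records), the leading term of $g_k$ is a squarefree monomial in the odd-indexed $y$'s times $x_{2k-2}$, and in the even case the leading term of $h$ is $\prod_{i \text{ odd}} y_i$. So both initial ideals are monomial ideals whose generators have $x$-degree at most one in each variable. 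The key step is then a dimension/Hilbert-function count: I would compute $\dim R(I_{n-2}(C_n))$ — which equals $n+1$, since the Rees algebra of an ideal of positive height in an $n$-dimensional domain has dimension $n+1$ — and conclude that $\dim T/J = n+1$, hence $\height J = (2n-1)-(n+1)=n-2$ in the odd case and $\height J=n-1$ in the even case (one more relation because $\dim F=n$ versus $n-1$, consistent with \Cref{gcd} with $d=2$). On the other side, a direct computation shows $\height\ini_<(L)=n-2$ and $\height\ini_<(L+HT)=n-1$. Since $T/\ini_<(J)$ and $T/\ini_<(L+HT)$ (resp. $T/\ini_<(L)$) are both Cohen–Macaulay of the same dimension with one contained in the other — here I would invoke that $\ini_<(L)$ is a complete intersection up to the $g_k$'s, or more robustly that $\ini_<(L+HT)$ is unmixed — the containment $\ini_<(L+HT)\subseteq\ini_<(J)$ between two ideals of the same height with $T/\ini_<(L+HT)$ unmixed forces equality, because $J$ is prime and therefore $\ini_<(J)$ has no embedded components obstructing this.

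To make the unmixedness argument rigorous I would either (i) verify directly from the Gröbner basis that $\ini_<(L+HT)$ has a primary decomposition all of whose components have height $n-1$ — feasible since the generators are squarefree in disjoint-ish variable sets and the $g_k$-leading terms and $h$ interact in a controlled way — or (ii) show $T/(L+HT)$ is reduced (it is, being a coordinate ring cut out by binomials whose initial ideal is squarefree, hence $T/(L+HT)$ is reduced by a standard argument) and of dimension $n+1$ with $\Spec$ irreducible, identifying it with the affine toric variety parametrized by $\varphi$; since $R(I_{n-2}(C_n))$ is by definition that same toric variety's coordinate ring, $L+HT$ and $J$ have the same radical, and reducedness of $T/(L+HT)$ then gives $L+HT=J$. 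Route (ii) is cleaner. The main obstacle I anticipate is route (ii)'s requirement that $T/(L+HT)$ be reduced: this needs the initial ideal to be squarefree, which for $n$ even must be checked for $\ini_<(L+HT)=(x_1y_2,\ldots,x_{n-1}y_n,\,\text{lt}(g_1),\ldots,\text{lt}(g_{n/2}),\prod_{i\text{ odd}}y_i)$ — one must confirm no generator of the initial ideal has a squared variable, which is visible from \Cref{linear relations} and the formula for $h$, and that the Buchberger completions producing $\mathcal{G}\cup\{h\}$ in the appendix introduce no new non-squarefree leading terms. Once reducedness and the dimension count are in hand, equality $J=L+HT$ (resp. $J=L$) follows, which is exactly the assertion that $I_{n-2}(C_n)$ is of fiber type, with $H$ as in \Cref{fiber}.
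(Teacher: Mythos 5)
Your overall strategy --- reduce to showing that the candidate ideal $J'=L+HT$ (resp.\ $L$) coincides with $J$ via a dimension count plus a structural property of $J'$ --- is the same as the paper's, and your dimension computation ($\dim T/J=\dim R(I)=n+1$, hence $\height J=n-1$ for $n$ even) is correct. However, both of your routes to the final equality have a genuine gap at the decisive step. Route (i) rests on the claim that a containment $\ini_<(L+HT)\subseteq \ini_<(J)$ between ideals of the same height, with the smaller one unmixed, forces equality. This is false in general: $(xy)\subseteq(x)$ in $K[x,y]$ is a containment of unmixed ideals of the same height that is strict. What one actually needs is equality of Hilbert functions (or that the smaller ideal is prime), and a height count alone does not deliver that. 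Route (ii) is closer to a correct argument --- reduced $+$ irreducible $+$ same dimension $+$ contained in the prime $J$ would indeed give $L+HT=J$ --- but irreducibility of $\Spec T/(L+HT)$ is exactly the primality of $L+HT$, and you assert it ("identifying it with the affine toric variety parametrized by $\varphi$") rather than prove it. The containment $L+HT\subseteq J$ only gives $V(J)\subseteq V(L+HT)$; a priori $V(L+HT)$ could have extra components of the same dimension, and reducedness does not rule this out. This primality is the heart of the paper's proof: there one shows from the Gr\"obner basis that every $y_i$ is a non-zerodivisor modulo $J'$, localizes at $y=y_0\cdots y_{n-1}$, and by explicit elimination identifies $T_y/J'T_y$ with a localization of $R[x_0,x_{n-1}]/(h)$, which is a domain because $h$ is irreducible. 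Some such argument (or a toric/lattice-ideal argument as in the proof of \Cref{fiber}) must be supplied; without it the proof is incomplete.

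A secondary point: your identification of the leading terms is off. With $<_1$ the reverse lexicographic order induced by $y_1>\cdots>y_{n-1}>y_0$ one has $y_j>y_{j+1}$, so the leading term of $f_j=x_{j-2}y_j-x_jy_{j+1}$ is $x_{j-2}y_j$ (as underlined in the appendix), not $x_jy_{j+1}$. This does not change the shape of your argument --- the initial ideal is still generated by squarefree monomials, so your observation that $L+HT$ is radical is correct and is also how the paper obtains normality in \Cref{niceinitial} --- but the generators of $\ini_<(L+HT)$ you list are not the right ones.
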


\begin{proof}
The proof is known when $n$ is odd, see \cite[Theorem~3.4]{Bd01}. So we only have to deal with the case when $n$ is even. We set $J'=(f_1,\ldots,f_n, h)$. Then obviously $J'\subseteq J$.

 We have $\dim R(I_{n-2}(C_n))=\dim S+1=n+1$. Thus  $J'=J$, once we have shown that $J'$ is a prime ideal and that $\dim T/J'=n+1$.

It can be seen from the Gr\"obner basis of $J'$ given in \Cref{GBfibers}  that $y_0$ is a non-zerodivisor modulo  $J'$. By rotational symmetry it follows that all $y_i$ are non-zerodivisors modulo $J'$. This implies that  $T/J'\To T_y/J'T_y$ is injective, where $y=y_0y_1\cdots y_{n-1}$, and $J'T_y =
(\{x_i-x_{i-2}w_i\mid i=1,\ldots,n-1\},h)$ with  $w_i=y_iy_{i+1}^{-1}$. In particular,  $x_1-x_{n-1}w_1$ and  $x_3-x_1w_3$ belong to $J'T_y$. It follows that $x_3-x_1w_3+w_3(x_1-x_{n-1}w_1)=x_3-x_{n-1}w_1w_3\in J'T_y$, and we may replace $x_3-x_1w_3$  by  $x_3-x_{n-1}w_1w_3$ in the set of generators of $J'T_y$, since $x_3-x_1w_3$ can also be written as a  linear combination of $x_1-x_{n-1}w_1$ and $x_3-x_{n-1}w_1w_3$. 
By applying  similar substitutions, we may replace 
$x_i-x_{i-2}w_i$ by $x_i-x_{n-1}\prod_{j=0}^{\frac{i-1}{2}}w_{2j+1}$, if $i$ is odd, and we may replace  it by $x_i-x_{0}\prod_{j=0}^{\frac{i-2}{2}}w_{2j+2}$, if $i$ is even. In particular, $x_{n-1}-x_{n-3}w_{n-1}$ can be replaced by $x_{n-1}-x_{n-1}\prod_{j=0}^{\frac{n-2}{2}}w_{2j+1}$, which after multiplication with the unit  $\prod_{j=0}^{\frac{n}{2}-1}y_{2j}$, can be replaced by $x_{n-1}h$. Similarly, $x_{n}-x_{n-2}w_{n}$ can be replaced by $x_nh$.

We conclude that 
$T_y/J'T_y\iso (R[x_0,x_{n-1}]/(h)R[x_0,x_{n-1}])_y
$. It follows  that  $T_y/J'T_y$ is a domain, since $h$ is an irreducible polynomial in the polynomial ring $R[x_0,x_{n-1}]$. This proves  that $J'$ is a prime ideal.

Finally, notice that 
\begin{align*}
    \dim T/J'=\dim T_y/J'T_y=\dim (R[x_0,x_{n-1}]/(h)R)_y\\=\dim R[x_0,x_{n-1}]/(h)R[x_0,x_{n-1}]=n+1, 
\end{align*}
as desired.

\end{proof}

The preceding theorem together with \Cref{GBfibers} yields  the following structural results about the Rees ring of $I_{n-2}(C_n)$. 

\begin{Corollary}
    \label{niceinitial} 
 Let $n\geq 3$ and let $I=I_{n-2}(C_n)$. Then the following holds:
 \begin{enumerate}
\item[{\em (a)}] $R(I)$ is a normal Cohen--Macaulay domain. 
\item[{\em (b)}] $I$  is of linear  type if $n$ is odd, and $I$  is of fiber type if $n$ is even.  
\item[{\em (c)}] $I^s$ has linear quotients  for all $s\geq 1.$
 \end{enumerate}
 \end{Corollary}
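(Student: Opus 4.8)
The three statements are all obtained from \Cref{Rees} and \Cref{GBfibers} (together with \Cref{GB1} when $n$ is odd), with essentially no new computation. Part~(b) is merely a reformulation of \Cref{Rees}: for $n$ odd that theorem gives $\Ker(\varphi)=J=L$, which says exactly that $\psi$ is an isomorphism, i.e.\ $I$ is of linear type; for $n$ even it gives $J=L+HT$ with $H=(h)$ the ideal of fiber relations (by \Cref{fiber}), which is the definition of fiber type.

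For part~(c) the plan is to verify the $x$-condition and then invoke \cite[Theorem~10.1.9]{HH}. I would fix the product order $<$ of \Cref{GB1} in which $<_2$ is taken to be the lexicographic order on $S$ with $x_1>x_2>\cdots>x_n$. By \Cref{GB1} (for $n$ odd, where $J=L$) and by \Cref{GBfibers} (for $n$ even, where $J=L+HT$), the listed binomials form a Gr\"obner basis of $J$, so $\ini_<(J)$ is generated by their leading terms. Reading off these leading terms as in the appendix, each one coming from an $f_j$ or a $g_k$ has the form $x_a\,y_{i_1}\cdots y_{i_r}$ with pairwise distinct $y$-indices, while the leading term of $h$ is a squarefree monomial in the $y$'s alone; in particular $\deg_{x_i}u\le 1$ for every variable $x_i$ and every minimal monomial generator $u$ of $\ini_<(J)$. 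Hence $I$ satisfies the $x$-condition relative to a product order of the shape required in \cite[Theorem~10.1.9]{HH}, and that theorem yields that $I^s$ has linear quotients for all $s\ge 1$.

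For part~(a), \Cref{Rees} already shows that $R(I)=T/J$ is a domain. Since $R(I)=K[x_1,\ldots,x_n,u_1s,\ldots,u_ns]$ is a monomial subalgebra of $K[x_1,\ldots,x_n,s]$, it is an affine semigroup ring and $J$ is its toric ideal. The computation above shows in addition that $\ini_<(J)$ is a \emph{squarefree} monomial ideal: each listed leading term is a product of pairwise distinct variables of $T$, where for the $g_k$ one uses that, since $k\le\lfloor n/2\rfloor$, the integers $0,1,\ldots,2k-1$ are pairwise distinct modulo $n$. By the classical result of Sturmfels that a toric ideal admitting a squarefree initial ideal defines a normal semigroup ring, $R(I)$ is normal; and by Hochster's theorem a normal affine semigroup ring is Cohen--Macaulay. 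Therefore $R(I)$ is a normal Cohen--Macaulay domain.

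The only point that calls for some care is the squarefreeness bookkeeping in part~(a), namely that the two monomials appearing in each $g_k$ are genuinely squarefree, which reduces to the elementary inequality $2k\le n$; everything else is a formal consequence of \Cref{Rees}, \Cref{GBfibers}, and the quoted general results, so no genuine obstacle is expected.
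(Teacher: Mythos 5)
Your proposal is correct and follows essentially the same route as the paper: both deduce (a) from the squarefree initial ideals of \Cref{GB1} and \Cref{GBfibers} via Sturmfels' normality criterion and Hochster's theorem, read off (b) directly from the generators in \Cref{Rees}, and obtain (c) by checking the $x$-condition for the product order and applying \cite[Theorem~10.1.9]{HH}. No meaningful differences to report.
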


\begin{proof}
(a)  Let $J$ as before be the defining ideal of $R(I)$. By \Cref{GB1} and \Cref{GBfibers},  $J$ has a squarefree initial ideal. By \cite[Proposition~13.15]{St} this implies that $R(I)$ is a normal. Then, since $R(I)$ is a toric ring, we obtain  that $R(I)$ is a  Cohen--Macaulay domain,  see \cite[Theorem~1]{Ho}. 

(b) This can be seen from the generators of $J$ given in \Cref{Rees}.

(c) It is shown in \Cref{GB1} and \Cref{GBfibers} that $J$ satisfies the $x$-condition. If in \Cref{GB1} we set $<_2=<_{lex}$ to be the lexicographic order induced by $x_1>x_2>\cdots>x_{n-1}>x_0$, then the corresponding product order $<$ satisfies the assumptions of \cite[Theorem~10.1.9]{HH}. This yields the desired conclusion. 
\end{proof}

In the following we give an interesting interpretation of the fiber relation $h$ of the ideal $I=I_{n-2}(C_n)$ when $n$ is even. With the notation as in \Cref{Rees} we set
$L=(f_1,\ldots,f_n)$. Then the symmetric algebra $S(I)$ of $I$ has the presentation $T/L$.  Exchanging the role of the $x_i$ and the $y_j$, we may view $T/L$ also as the symmetric algebra of an $R$-module $E$, which is called the Jacobian dual of $I$. The relation matrix $A$ of $E$  is defined by the equation $\fb=A\xb$, where $\fb=[f_1,\ldots,f_n]^T$ and $\xb=[x_1,\ldots,x_n]^T$. It can be seen that $A$ is a skew-symmetric matrix and that the Pfaffian of $A$ is just the fiber relation $h$.

\medskip We are ready to compute the Hilbert series and Cohen-Macaulay type of $R(I_{n-2}(C_n))$.
\begin{Theorem}\label{Hilbert series}
Let $I=I_{n-2}(C_n)$. Then:
\begin{enumerate}

\item [\emph{(a)}] if $n=2s+1$, we have 
    $$\HS(R(I))=\frac{\sum_{k=0}^{s-1}{(1+z)^{2k+1}z^{s-1-k}}+z^s}{(1-z)^{2s+2}},$$
\item [\emph{(b)}] if $n=2s$, we have
$$
 \HS(R(I))=\frac{\sum_{k=0}^{s-1}{(1+z)^{2k}z^{s-1-k}}}{(1-z)^{2s+1}}
$$
\end{enumerate}

    
\end{Theorem}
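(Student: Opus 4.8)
The plan is to read the Hilbert series off the initial ideal of $J$, for which the Gröbner bases are already at hand. By \Cref{Rees}, \Cref{GB1} and \Cref{GBfibers},
\[
\HS(R(I))=\HS(T/J)=\HS(T/\ini_<(J))
\]
for the product order $<$ of \Cref{GB1}. From the underlined leading terms of the Gröbner basis, $\ini_<(J)$ is the squarefree monomial ideal minimally generated by the quadrics $x_{j-2}y_j$ for $j=1,\dots,n-1$, by the monomials $x_{2k-2}\prod_{i\in[0,2k),\,i\text{ odd}}y_i$ for $k=1,\dots,\lfloor n/2\rfloor$ (the one with $k=n/2$ being redundant when $n$ is even, since it is a multiple of $\ini_<(h)=\prod_{i\in[0,n),\,i\text{ odd}}y_i$), and, when $n$ is even, by $\ini_<(h)$ itself. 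Being squarefree, $\ini_<(J)$ is the Stanley--Reisner ideal of a simplicial complex $\Delta$ on the $2n$ vertices $\{x_0,\dots,x_{n-1}\}\cup\{y_0,\dots,y_{n-1}\}$, so $\HS(R(I))=\sum_{F\in\Delta}z^{|F|}(1-z)^{-|F|}$, and it remains to count the faces of $\Delta$ by cardinality.

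The first simplification is that $x_{n-2}$ and $y_0$ lie in no minimal nonface (the indices $j-2$, $j=1,\dots,n-1$, run over all residues mod $n$ except $n-2$, and no generator involves $y_0$), so $\Delta$ is a cone over each of them and $\HS(R(I))=(1-z)^{-2}\,\HS(K[\Delta''])$, where $\Delta''$ is the induced subcomplex on $V=\{x_i:i\ne n-2\}\cup\{y_1,\dots,y_{n-1}\}$. The quadratic nonfaces $\{x_{j-2},y_j\}$ form a \emph{perfect matching} between the two halves of $V$, so the subcomplex $\Gamma$ of $\Delta''$ cut out by the quadrics alone is the boundary complex of the $(n-1)$-dimensional cross-polytope, with $\HS(K[\Gamma])=(1+z)^{n-1}(1-z)^{-(n-1)}$. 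Thus $\Delta''$ is obtained from $\Gamma$ by deleting exactly the faces that contain one of the "long" nonfaces $\{x_{2k-2}\}\cup\{y_1,y_3,\dots,y_{2k-1}\}$ (and, for $n$ even, the set $\{y_1,y_3,\dots,y_{n-1}\}$), so $\HS(K[\Delta''])=\HS(K[\Gamma])-\sum z^{|F|}(1-z)^{-|F|}$, the sum running over the faces $F$ of $\Gamma$ that contain such a long nonface. To evaluate this correction I would stratify a face $F=F_X\cup F_Y$ of $\Gamma$ by the integer $r$ equal to the length of the longest initial run $y_1,y_3,\dots,y_{2r-1}$ contained in $F_Y$: for a fixed $F_Y$ the admissible $F_X$ are precisely the subsets of $\{x_i:i\ne n-2,\ y_{i+2}\notin F_Y\}$, and $F$ is deleted exactly when $r=n/2$ ($n$ even) or some $x_{2k-2}$ with $k\le r$ lies in $F_X$. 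Each stratum then contributes a product of elementary factors of shapes $(1+t)$, $(1+2t)$ with $t=z(1-z)^{-1}$; summing the resulting finite geometric series in $r$ and substituting back $1+t=(1-z)^{-1}$ should collapse everything, via the identity $\sum_{k=0}^{s-1}(1+z)^{2k}z^{s-1-k}=\bigl((1+z)^{2s}-z^{s}\bigr)/(1+z+z^{2})$, to the numerators in (a) and (b); the odd and even cases run in parallel, differing only in whether the top index enters through a long nonface or through $\ini_<(h)$.

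The step I expect to be the real obstacle is the bookkeeping inside the correction term. The chain of quadrics closes up into a cycle with a single gap at $x_{n-2}$, and the exclusions $x_0,x_2,\dots,x_{2r-2}$ forced by the long nonfaces overlap with the exclusions already imposed on those same even-indexed $x_i$ by the matching (whenever $y_{i+2}\in F_Y$); these overlaps must be tracked carefully so that each $r$-stratum factors cleanly as a power of $z$ times $(1+z)$ to a power linear in $r$, with no double counting. Once that is arranged the summation is routine. As consistency checks one can verify the formulas for small $n$: $n=3$ gives $\HS(R(I))=(2+z)/(1-z)^{4}$ (here $I=(x_1,x_2,x_3)$ and $R(I)$ is the Rees algebra of the maximal ideal of $K[x_1,x_2,x_3]$), $n=4$ gives $(1+3z+z^{2})/(1-z)^{5}$, and $n=5$ gives $(1+4z+5z^{2}+z^{3})/(1-z)^{6}$; and for $n$ even the resulting numerator is palindromic, in agreement with the Gorenstein (Pfaffian) structure of $R(I)$ noted above.
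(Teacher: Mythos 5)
Your route is genuinely different from the paper's. The paper computes $\HS(T/\ini_<(J))$ by a double induction on $n$, passing between the initial ideals for consecutive $n$ via decompositions of the form $\HS(T/K)=\HS(T/(K+(m)))+z^{\deg m}\HS(T/(K:m))$; you propose instead a direct face count of the Stanley--Reisner complex of $\ini_<(J)$. Your setup is correct: $x_{n-2}$ and $y_0$ are cone points, the matching quadrics cut out the boundary of a cross-polytope with Hilbert series $(1+z)^{n-1}/(1-z)^{n-1}$, and the remaining generators impose exactly the ``long nonface'' deletions you describe. Moreover, the bookkeeping you worry about actually decouples cleanly: the odd-indexed $y_i$ occurring in the long nonfaces are matched to odd-indexed $x$'s (together with $x_{n-1}$), while the dangerous vertices $x_0,x_2,\dots$ are matched to even-indexed $y$'s, so the matched pairs split into two disjoint families and the deleted-face count factors. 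For $n=2s+1$ this gives, with $t=z/(1-z)$,
\[
\HS(K[\Delta''])=(1+2t)^{2s}-\sum_{r=1}^{s-1}t^{r}(1+t)(1+2t)^{s-r-1}\bigl[(1+2t)^{s}-(1+t)^{r}(1+2t)^{s-r}\bigr]-t^{s}\bigl[(1+2t)^{s}-(1+t)^{s}\bigr],
\]
which for $s=1,2$ does reproduce the numerators in (a).

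That said, as written your proposal is a strategy rather than a proof: the entire content of the theorem is the closed-form evaluation of the correction term, and you explicitly defer it (``the real obstacle \dots\ once that is arranged the summation is routine''). Until the stratified sum is written down and summed for general $s$ in both parities --- the even case carrying the extra pure-$y$ nonface and the differently placed cone point $x_{n-2}$ --- nothing is established. There is also a concrete error in your one numerical check: for $n=3$ both the formula in (a) with $s=1$ and the actual Hilbert series of the Rees algebra of $(x_1,x_2,x_3)$ (the cone over the Segre embedding of $\PP^1\times\PP^2$, with $h$-vector $(1,2)$) equal $(1+2z)/(1-z)^4$, not $(2+z)/(1-z)^4$. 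Since the whole point of the face count is to land exactly on these numerators, you should repair this check before trusting the rest of the computation.
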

\begin{proof}
As before, let $f_1=\underline{x_{n-1}y_1}-x_1y_2$, $f_2=\underline{x_0y_2}-x_2y_3$, $\ldots,$ $f_{n-1}=\underline{x_{n-3}y_{n-1}}-x_{n-1}y_0$ and let 
$$g_k=\underline{x_{2k-2}\prod_{\substack{i\in [0,2k)\\i \text{ odd}}}y_i}-x_{n-2}\prod_{\substack{i\in [0,2k)\\i \text{ even}}}y_i$$
for $k=1,\ldots, \lfloor \frac{n}{2}\rfloor$. Recall from \Cref{GB1} and \Cref{linear relations} that the polynomials $f_1,\ldots,f_{n-1},g_1,\ldots, g_{\lfloor \frac{n}{2}\rfloor}$ form a Gröbner basis of $L=(f_1,\ldots, f_{n-1},g_1)$. If $n=2s+1$, we have $J=L$ by \cite[Theorem 3.4]{Bd01}. Also recall from \Cref{Rees} that if $n$ is even, we have $J=L+hT$, where $h=\underline{\prod_{\substack{i\in [0,n)\\i \text{ odd}}}y_i}-\prod_{\substack{i\in [0,n)\\i \text{ even}}}y_i$, and $f_1,\ldots,f_{n-1},g_1,\ldots,g_{\frac{n}{2}},h$ form a Gröbner basis of $J$.

Let $K_{n}$ denote the initial ideal of $J$. Then if $n=2s+1$ we have
\begin{multline*}
K_{2s+1}=(x_{2s}y_1,x_0y_2,\ldots,x_{2s-2}y_{2s},x_0y_1,x_2y_1y_3,\ldots,\\  x_{2s-4}y_1y_3\cdots y_{2s-3},x_{2s-2}y_1y_3\cdots y_{2s-1}).
\end{multline*}
Define an auxiliary ideal
\begin{multline*}
 K'_{2s+1}=K_{2s+1}+(y_1y_3\cdots y_{2s-1})
 =(x_{2s}y_1,x_0y_2,\ldots,x_{2s-2}y_{2s},x_0y_1,x_2y_1y_3,\ldots,\\
 x_{2s-4}y_1y_3\cdots y_{2s-3},y_1y_3\cdots y_{2s-1}).   
\end{multline*}
Now, if $n=2s$, the corresponding initial ideal is
\begin{multline*}
    K_{2s}=(x_{2s-1}y_1,x_0y_2,\ldots,x_{2s-3}y_{2s-1},x_0y_1,x_2y_1y_3,\ldots, \\ 
    x_{2s-4}y_1y_3\cdots y_{2s-3},y_1y_3\cdots y_{2s-1}).
\end{multline*}

Note that our Gröbner basis is not reduced and the leading term of $h$ divides the leading term of $g_s$.
From this point on, the proof will be divided into two steps:

Step~1: We will show that $(a)$ is equivalent to 
$$
\HS(T/K'_{2s+1})=\frac{\sum_{k=0}^{s-1}{(1+z)^{2k+1}z^{s-1-k}}}{(1-z)^{2s+2}}.
$$
Indeed, we have $K_{2s+1}+(y_1y_3\cdots y_{2s-1})=K'_{2s+1}$ and $K_{2s+1}:(y_1y_3\cdots y_{2s-1})=(x_{2s},x_1,x_3,\ldots,x_{2s-3},x_0,x_2,\ldots,x_{2s-2})=(x_{2s},x_0,x_1,\ldots,x_{2s-2})$. Therefore,

\begin{align*}
    &\HS(T/K_{2s+1})\\&=\HS(T/(K_{2s+1}+(y_1y_3\cdots y_{2s-1})))+ z^s\HS(T/(K_{2s+1}:(y_1y_3\cdots y_{2s-1})))\\&=\HS(T/K'_{2s+1})+\frac{z^s}{{(1-z)}^{2s+2}}.
\end{align*}

Step~2: We will prove the theorem by induction on $n$. There will be two induction steps, one transitioning from $n$ even to $n$ odd, and the other one transitioning from $n$ odd to $n$ even.

    (1) Let us assume we know the  theorem is true for all $n\le 2s$. We want to prove it for $n=2s+1$. 
    
    We are  working inside the ring $T=K[x_0,\ldots,x_{2s},y_{0},\ldots,y_{2s}]$. The factor ring $T/K_{2s}T$ has  the Hilbert series given by (b), divided by $(1-z)^2$ (since we have $2$ extra variables $x_{2s}$ and $y_{2s}$).
    Let $\tilde{K}_{2s}\subset T$ be the ideal obtained from $K_{2s}T$ by switching $x_{2s-1}\leftrightarrow x_{2s}$. Clearly, $T/\tilde{K}_{2s}$ has the same Hilbert series as $T/K_{2s}T$ (which we know by the induction hypothesis). Now, 
    \begin{multline*}
     \tilde{K}_{2s}+(x_{2s-2}y_{2s})=(x_{2s}y_1,x_0y_2,\ldots,x_{2s-3}y_{2s-1},x_{2s-2}y_{2s} x_0y_1,x_2y_1y_3,\\ \ldots, x_{2s-4}y_1y_3\cdots y_{2s-3},y_1y_3\cdots y_{2s-1})=K'_{2s+1}   
    \end{multline*}
and $\tilde{K}_{2s}:(x_{2s-2}y_{2s})=\tilde{K}_{2s}$. Therefore 
$$\HS(T/\tilde{K}_{2s})=\HS(T/K'_{2s+1})+z^2\HS(T/\tilde{K}_{2s})
$$
and hence 
\begin{align*}
    \HS(T/K'_{2s+1})&=(1-z^2)\HS(T/\tilde{K}_{2s}t)=(1-z^2)\frac{\sum_{k=0}^{s-1}{(1+z)^{2k}z^{s-1-k}}}{(1-z)^{2s+3}}\\
    &=\frac{\sum_{k=0}^{s-1}{(1+z)^{2k+1}z^{s-1-k}}}{(1-z)^{2s+2}},
\end{align*}
which by Step~1 is equivalent to having the desired Hilbert series for $T/K_{2s+1}$.
\medskip

(2) Let us assume we know the statement is true for all $n\le 2s+1$, we want to prove it for $n=2s+2$. 

Now we are working inside the ring $T=K[x_0,\ldots,x_{2s+1},y_{0},\ldots,y_{2s+1}]$. 
   The factor ring $T/K_{2s+1}T$ has the Hilbert series given by (a), divided by $(1-z)^2$ (since we have extra variables $x_{2s+1}$ and $y_{2s+1}$).
    Let $\tilde{K}_{2s+2}\subset T$ be the ideal obtained from $K_{2s+2}$ by switching $x_{2s}\leftrightarrow x_{2s+1}$. Clearly, $T/\tilde{K}_{2s+2}$ has the same Hilbert series as $T/K_{2s+2}$ (which we do not know yet, but will compute). Now, 
    \begin{multline*}
     \tilde{K}_{2s+2}+(y_{2s+1})=(x_{2s}y_1,x_0y_2,\ldots,x_{2s-3}y_{2s-1},x_{2s-2}y_{2s}, y_{2s+1},\\ x_0y_1,x_2y_1y_3,\ldots, x_{2s-4}y_1y_3\cdots y_{2s-3},y_1y_3\cdots y_{2s-1})=K_{2s+1}+(y_{2s+1}).   
    \end{multline*}
and 
\begin{multline*}
\tilde{K}_{2s+2}:(y_{2s+1})=(x_{2s}y_1,x_0y_2,\ldots,x_{2s-3}y_{2s-1},x_{2s-2}y_{2s}, x_{2s-1},\\
x_0y_1,x_2y_1y_3,\ldots, x_{2s-4}y_1y_3\cdots y_{2s-3},y_1y_3\cdots y_{2s-1})=K'_{2s+1}+(x_{2s-1}). 
\end{multline*}
By the induction hypothesis we know the Hilbert series of $T/K_{2s+1}T$ and hence by Step~1 the Hilbert series of $T/K'_{2s+1}T$. Therefore, we also know the Hilbert series of  $T/(K'_{2s+1}+(x_{2s-1}))T$ since $x_{2s-1}$ is a non-zerodivisor modulo $K'_{2s+1}T$.
Then we obtain
\begin{align*}
  &\HS(T/K_{2s+2})=\HS(T/\tilde{K}_{2s+2})\\
  &=\HS(T/(K_{2s+1}+(y_{2s+1}))T)+z\HS(T/(K'_{2s+1}+(x_{2s-1}))T)\\
  &=\frac{\sum_{k=0}^{s-1}{(1+z)^{2k+1}z^{s-1-k}}+z^s}{(1-z)^{2s+3}}+\frac{z\sum_{k=0}^{s-1}{(1+z)^{2k+1}z^{s-1-k}}}{(1-z)^{2s+3}}\\
  &=\frac{z^s+(1+z)\sum_{k=0}^{s-1}{(1+z)^{2k+1}z^{s-1-k}}}{(1-z)^{2s+3}}=\frac{z^s+\sum_{k=0}^{s-1}{(1+z)^{2(k+1)}z^{s-(k+1)}}}{(1-z)^{2s+3}}\\
  &=\frac{z^s+\sum_{k=1}^{s}{(1+z)^{2k}z^{s-k}}}{(1-z)^{2s+3}}=\frac{\sum_{k=0}^{s}{(1+z)^{2k}z^{s-k}}}{(1-z)^{2s+3}}.
\end{align*}

\end{proof}

Let $I=I_{n-2}(C_n)$. 
Since $R(I)$ is a domain and its  $h$-vector is symmetric, it follows from \Cref{Hilbert series} and \cite[Theorem~4.4]{Stanley} that $R(I)$ is Gorenstein when $n$ is even. \Cref{Hilbert series} also implies that $R(I)$ is not Gorenstein, if $n$ is odd. We actually have the following

\begin{Theorem}
 \label{typetwo}  
 Let $n>1$ be an odd integer and let $I=I_{n-2}(C_n)$.  Then the Cohen-Macaulay type  of $R(I)$ is $2$.
 \end{Theorem}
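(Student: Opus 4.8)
The plan is to compute the Cohen--Macaulay type of $R(I)$ as the last total Betti number of the quotient $T/J$ over the polynomial ring $T$, and to do this by passing to the initial ideal. Since by \Cref{GB1} the generators $f_1,\dots,f_{n-1},g_1,\dots,g_s$ (with $n=2s+1$) form a Gröbner basis of $J=L$ with a squarefree initial ideal $K_{2s+1}$, and since taking initial ideals does not decrease graded Betti numbers while $R(I)=T/J$ is Cohen--Macaulay with $\dim T/J=n+1=\dim T/K_{2s+1}$, it suffices to show that $T/K_{2s+1}$ has Cohen--Macaulay type $2$, i.e. that $\dim_K \Ext^{c}_T(T/K_{2s+1},T)$ in top degree equals $2$, where $c=\codim$. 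Concretely I would show the last Betti number $\beta_{c,\bullet}(T/K_{2s+1})=2$; by upper-semicontinuity this forces $\beta_{c}(T/J)\le 2$, and since $R(I)$ is not Gorenstein (by the asymmetry of the $h$-vector in \Cref{Hilbert series}(a)) the type is exactly $2$.

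To compute $\beta_{c}(T/K_{2s+1})$ I would use the short exact sequence already appearing in the proof of \Cref{Hilbert series}: with $p=y_1y_3\cdots y_{2s-1}$ we have $K_{2s+1}:p=(x_{2s},x_0,x_1,\dots,x_{2s-2})$, a complete intersection (in fact a variable ideal on $2s$ of the $x$'s), and $K_{2s+1}+(p)=K'_{2s+1}$. This yields
\begin{equation*}
0\To (T/(K_{2s+1}:p))(-s)\To T/K_{2s+1}\To T/K'_{2s+1}\To 0 .
\end{equation*}
The module $T/(K_{2s+1}:p)$ is a polynomial ring in the remaining variables, hence Cohen--Macaulay of type $1$ and of the same dimension $n+1$ as $T/K_{2s+1}$. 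The plan is to prove that $T/K'_{2s+1}$ is also Cohen--Macaulay of dimension $n+1$ and of type $1$: indeed $K'_{2s+1}=(x_{2s}y_1,x_0y_2,\dots,x_{2s-2}y_{2s},x_0y_1,x_2y_1y_3,\dots,x_{2s-4}y_1y_3\cdots y_{2s-3},\,y_1y_3\cdots y_{2s-1})$ has a very structured, essentially "path-like" shape, and one can either recognize it (after the variable relabelling used in the induction of \Cref{Hilbert series}) as the Stanley--Reisner ideal of a shellable complex, or verify directly that it is a squarefree ideal whose quotient has the predicted Hilbert series $\sum_{k=0}^{s-1}(1+z)^{2k+1}z^{s-1-k}/(1-z)^{2s+2}$ with a unimodal, non-symmetric but "pure" $h$-vector forcing type $1$; alternatively one can set up a parallel induction computing $\beta_{c}(T/K'_{2s+1})$ and $\beta_{c}(T/K_{2s+1})$ simultaneously, mirroring Steps~1 and~2 of \Cref{Hilbert series}. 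Given the two outer terms are Cohen--Macaulay of type $1$ in the same dimension, the long exact sequence in $\Ext^{\bullet}_T(-,T)$ collapses to a four-term sequence; depth considerations show $\Ext^{c}_T(T/K'_{2s+1},T)\to\Ext^{c}_T(T/K_{2s+1},T)\to\Ext^{c}_T(T/(K_{2s+1}:p),T)(s)\to 0$ with the first map injective, whence the middle term has length $1+1=2$.

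The main obstacle is establishing that $T/K'_{2s+1}$ (equivalently its relabelled form) is Cohen--Macaulay of the full dimension $n+1$ and of type exactly $1$; the Hilbert series is already pinned down in \Cref{Hilbert series}, so the real content is controlling the depth and the top Betti number of this auxiliary squarefree ideal. I expect the cleanest route is to exhibit a linear system of parameters and show the Artinian reduction has a one-dimensional socle, or to identify the associated simplicial complex and invoke shellability; either way one must track the somewhat irregular generator $y_1y_3\cdots y_{2s-1}$ carefully. Once $T/K'_{2s+1}$ is under control, the additivity of the last Betti number along the displayed short exact sequence is immediate, and combined with the non-Gorensteinness from \Cref{Hilbert series} we conclude the type of $R(I)$ is $2$.
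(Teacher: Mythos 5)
Your overall reduction is sound in outline: the inequality $\beta_{i,j}(T/J)\le\beta_{i,j}(T/\ini_<(J))$ does give an upper bound on the type of the Cohen--Macaulay ring $R(I)=T/J$ in terms of the last Betti number of $T/K_{2s+1}$, and the lower bound $\ge 2$ from non-Gorensteinness (symmetry of the $h$-vector for CM domains, via \Cref{Hilbert series}) is exactly the observation the paper itself makes. The problem is that everything then hinges on the claim that $T/K'_{2s+1}$ is Cohen--Macaulay of dimension $n+1$ and of type exactly $1$ (and, depending on whether you run the long exact sequence in $\Ext$ or in $\Tor$, you also need Cohen--Macaulayness of $T/K_{2s+1}$ itself to identify $\beta_c$ with the number of generators of $\Ext^c$). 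You name this as ``the main obstacle'' and then offer three possible strategies without carrying out any of them; that is precisely the content of the theorem, so as it stands the argument has a genuine gap. Worse, one of your three suggested routes is based on a false principle: a Hilbert series with a ``unimodal, pure'' $h$-vector does not force type $1$ --- the Hilbert function does not determine the Cohen--Macaulay type, so \Cref{Hilbert series} alone can never close this gap. The shellability/s.o.p. routes could in principle work, but they require a careful analysis of the irregular generator $y_1y_3\cdots y_{2s-1}$ that you have not supplied, and it is not obvious a priori that $T/K'_{2s+1}$ is even unmixed.

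For comparison, the paper avoids the initial ideal entirely: it exhibits an explicit maximal regular sequence $\sb$ on $R(I)$ (using the Cohen--Macaulayness already established in \Cref{niceinitial}), reduces to an Artinian quotient $\bar R=K[x_1,\dots,x_{n-1}]/(A+(x_1x_2))$ with $A$ a complete intersection, identifies the canonical module as $(A:(x_1x_2))/A$ via Gorenstein duality for $\bar S=K[x_1,\dots,x_{n-1}]/A$, and then proves by a hands-on reduction argument that this colon ideal needs exactly two generators, namely $x_1$ and $x_2x_4\cdots x_{n-1}$. That route is longer in elementary detail but requires no auxiliary Cohen--Macaulayness claims. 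If you want to salvage your approach, the missing piece is a genuine proof that $K'_{2s+1}$ (equivalently, after discarding cone vertices, a certain independence-type complex) is Cohen--Macaulay with one-dimensional socle in its Artinian reduction; until that is supplied, the proof is incomplete.
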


 \begin{proof} The Rees algebra $R(I)$ modulo the image of the sequence $$\sb=x_1-y_3,x_2-y_4, \ldots, x_{i-2}-y_i, \ldots, x_{n-2}-y_n, x_{n-1}-y_1, x_n, y_2$$ in $R(I)$  is isomorphic to $K[x_1,\ldots,x_{n-1}]/\bar{J}$, where 
 \[
 \bar{J}=(x_1^2-x_2x_3,x_2^2-x_3x_4,\ldots,x_{n-3}^2-x_{n-2}x_{n-1},x_{n-2}^2,x_{n-1}^2, x_1x_2).
 \]
We denote by $<$ the lexicographical order on $K[x_1,\ldots,x_{n-1}]$ induced by $x_1> x_2>\cdots>x_{n-1}$. Then $\ini_{<}(\bar{J})$ contains $x_1^2,x_2^2, \ldots, x_{n-1}^2$,  This implies that $\dim R(I)/(\sb)=0$.  By \Cref{niceinitial}(a)  the ring $R(I)$ is Cohen-Macaulay. Since $\dim R(I)$ coincides with the length of the sequence $\sb$, it follows therefore from \cite[Theorem 2.1.2(c)]{BH} that $\sb$ is an  $R(I)$-sequence. From this we deduce that $R(I)$ and $\bar{R}=R(I)/(\sb)$ have the same Cohen-Macaulay type, see \cite[Proposition A.6.2]{HH}. Thus is suffices to show that the Cohen-Macaulay type of $\bar{R}$ is $2$. 

Let $A=(x_1^2-x_2x_3,\ldots,x_{n-3}^2-x_{n-2}x_{n-1},x_{n-2}^2,x_{n-1}^2)\subset K[x_1,\ldots, x_{n-1}]$. Then $\bar{J}=A+(x_1x_2)$, and $A$ is generated by a regular sequence since $\ini_{<}(A)=(x_1^2,\ldots,x_{n-1}^2)$ is generated by a regular sequence. Given this, it is well-known that the canonical module $\omega_{\bar{R}}$ is isomorphic to $(A:(x_1x_2))/A$. For the convenience of the reader we show why this is the case. 

The ring $\bar{S}=K[x_1,\ldots,x_{n-1}]/A$  is a  complete intersection and hence Gorenstein. In particular, up to a shift in the grading, we have that $\omega_{\bar{S}}\iso \bar{S}$.  Then it follows from \cite[Theorem 3.3.7]{BH} that, up to a shift, 
 \[
 \omega_{\bar{R}} \iso \Hom_{\bar{S}}(\bar{R},\bar{S})\iso 0:_{\bar{S}}(x_1x_2)\bar{S}. 
 \]
 The last equation results from the fact that $\bar{R}=\bar{S}/(x_1x_2)\bar{S}$. 

By \cite[Proposition~3.3.11]{BH}, the minimal number of generators of $\omega_{\bar{R}}$ is the Cohen-Macaulay type of $\bar{R}$.  Thus it remains to be shown that  $(A:(x_1x_2))/A$ is generated by $2$ elements.
 
In fact, we will show that $A:(x_1x_2)=A+(x_1,x_2x_4\ldots x_{n-3}x_{n-1})$. In other words, we want to determine for which polynomials $p$ we have $px_1x_2 \in A$. The inclusion $A+(x_1,x_2x_4\ldots x_{n-3}x_{n-1})\subseteq A:(x_1x_2)$ is obvious, so we will focus on the other one. 
Since $\ini_{<}(A)=(x_1^2,\ldots,x_{n-1}^2)$, the squarefree monomials of $K[x_1,\ldots,x_{n-1}]$ form a $K$-basis of $K[x_1,\ldots,x_{n-1}]/A$. 
One way of proving that an element belongs to $A$ is to prove that it reduces to $0$ by the division algorithm with respect to a Gröbner basis of $A$. In this case the reduction algorithm can be described as substituting $x_1^{2} \to x_2x_3,\ldots,x_{n-3}^{2} \to x_{n-2}x_{n-1}, x_{n-2}^{2} \to 0, x_{n-1}^2 \to 0$. Let $\bar{p}$ denote the remainder of $p$ with respect to this Gröbner basis. Since $p\equiv\bar{p} \mod A$, we may assume without loss of generality that $p=\bar{p}$. In other words, we may assume that all terms of $p$ are squarefree. Now we need to apply the reduction algorithm to $x_1x_2p$. In the first step of the proof we will show by induction the following claim: for every pair of squarefree monomials $m_1\not=m_2$ such that $\overline{x_1x_2m_1}=\overline{x_1x_2m_2}$ we have $\overline{x_1x_2m_1}=\overline{x_1x_2m_2}=0$. This will imply that each monomial in the support of $x_1x_2p$ has to reduce to $0$ individually, which will reduce our problem to the case where $p$ is a monomial. In the second step of the proof we will show by induction the following claim: if $m$ is a squarefree monomial such that $\overline{x_1x_2m}=0$, then $m\in A+(x_1,x_2x_4\ldots x_{n-3}x_{n-1})$.
 Note that if $n=3$, we have $A=(x_1^2,x_2^2)$ and $A:(x_1x_2)=A+(x_1,x_2)=(x_1,x_2)$. Both claims hold trivially in this case, and this settles the base case for inductive proofs of both claims.

To prove the first claim by induction, assume that the exists a pair of monomials $m_1=x_1^{\alpha_1}\cdots x_{n-1}^{\alpha_{n-1}}$ and $m_2=x_1^{\beta_1}\cdots x_{n-1}^{\beta_{n-1}}$ with each $\alpha_i$ and $\beta_i$ either $0$ or $1$ such that $m_1\not=m_2$ and $\overline{x_1x_2m_1}=\overline{x_1x_2m_2}\not=0$. First of all note that none of the $m_i$ is divisible by $x_1$, in other words, $\boxed{\alpha_1=\beta_1=0}$. Indeed, if $\alpha_1=1$, then $x_1x_2m_1$ is divisible by $x_1^2x_2\in A$ because $x_1\in A:(x_1x_2)$, and therefore $x_1x_2m_1$ reduces to $0$. The same holds if $\beta_1=1$. 
 
Now notice that $\alpha_2=\beta_2$. Indeed, if, say, $\alpha_2=0$ and $\beta_2=1$, then $\overline{x_1x_2m_1}=x_1x_2m_1$ (since $x_1x_2m_1$ is squarefree), while $x_1x_2m_2=x_1x_2^2x_3^{\beta_3}\cdots x_{n-1}^{\beta_{n-1}}\to x_1x_3x_4x_3^{\beta_3}\cdots x_{n-1}^{\beta_{n-1}}$.
From this first reduction step it is already clear that $x_2$ does not divide $\overline{x_1x_2m_2}$, while $x_2$ does divide $\overline{x_1x_2m_1}$, which is a contradiction. 
If $\alpha_2=\beta_2=0$, we have $\overline{x_1x_2m_1}=x_1x_2m_1$ and $\overline{x_1x_2m_2}=x_1x_2m_2$, which then implies $m_1=m_2$, which is a contradiction. So we will assume $\boxed{\alpha_2=\beta_2=1}$.
Let $m_1=x_2m_1'$ and $m_2=x_2m_2'$, where $m_1'=x_3^{\alpha_3}\cdots x_{n-1}^{\alpha_{n-1}}$ and $m_2'=x_3^{\beta_3}\cdots x_{n-1}^{\beta_{n-1}}$. Then $x_1x_2m_1=x_1x_2^2m_1'\to x_1x_3x_4m_1'$ and similarly $x_1x_2m_2=x_1x_2^2m_2'\to x_1x_3x_4m_2'$. Clearly, $x_1$ will not be involved in any further reductions. Therefore, it is enough to consider $x_3x_4m_1'$ and $x_3x_4m_2'$. Note that $m_1'$ and $m_2'$ are squarefree monomials in $K[x_3,x_4,\ldots, x_{n-1}]$, and the only reduction rules we can use to reduce them are $x_3^2\to x_4x_5, \ldots, x_{n-3}^2\to x_{n-2}x_{n-1},x_{n-1}^2\to 0, x_{n}\to 0$. This means we are doing a reduction of $x_3x_4m_1'$ and $x_3x_4m_2'$ modulo $A'=(x_3^2-x_4x_5, \ldots, x_{n-3}^2-x_{n-2}x_{n-1},x_{n-1}^2, x_{n})$ inside $K[x_3,x_4,\ldots, x_{n-1}]$. By the induction hypothesis we know that $\overline{x_3x_4m_1'}=\overline{x_3x_4m_2'}\not=0$ implies $m_1'=m_2'$ therefore, $m_1=m_2$, which is a contradiction.



Now it remains to prove the second claim. Let $m=x_1^{\alpha_1}\cdots x_{n-1}^{\alpha_{n-1}}
$ be a squarefree monomial such that $\overline{x_1x_2m}=0$. Assume $m\not\in A+(x_1,x_2x_4\ldots x_{n-3}x_{n-1})$. We first note that $\boxed{\alpha_1=0}$, otherwise we immediately get a contradiction. If $\alpha_2=0$, then $\overline{x_1x_2m_1}=x_1x_2m_1\not=0$. Therefore, $\boxed{\alpha_2=1}$. Let $m=x_2m'$, where $m'=x_3^{\alpha_3}\cdots x_{n-1}^{\alpha_{n-1}}$. Then $x_1x_2m=x_1x_2^2m'\to x_1x_3x_4m'$. Clearly, $x_1$ will not be involved in any further reductions, so it suffices to consider $x_3x_4m'$. Note that $m'$ is a squarefree monomial in $K[x_3,x_4,\ldots, x_{n-1}]$, and the only reduction rules we can use to reduce $x_3x_4m'$ are $x_3^2\to x_4x_5, \ldots, x_{n-3}^2\to x_{n-2}x_{n-1},x_{n-1}^2\to 0, x_{n}\to 0$. In other words, we are doing a reduction of $x_3x_4m'$ modulo $A'=(x_3^2-x_4x_5, \ldots, x_{n-3}^2-x_{n-2}x_{n-1},x_{n-1}^2, x_{n})$ inside $K[x_3,x_4,\ldots, x_{n-1}]$. By the induction hypothesis we know that the squarefree monomials modulo $A'$ which reduce to $0$ are either multiples of $x_3$ or multiples of $x_4x_6\cdots x_{n-1}$. Assume $m'$ is a multiple of $x_3$ modulo $A'$ (therefore, also modulo $A$). Then, since $m=x_2m'$, it follows that $m$ is a multiple of $x_2x_3$, and hence a multiple of $x_1^2$ modulo $A$, which is a contradiction. If $m'$ is a multiple of $x_4\cdots x_{n-1}$, then modulo $A$ we have that 
 $m$ a multiple of $x_2x_4\cdots x_{n-1}$, which is a contradiction.
\end{proof}
\section{The structure of the Rees ring of $I_{\frac{n}{2}}(C_n)$}
The methods and the results in this section will be similar to those in \Cref{sec:n-2}. Note, however, that the roles of $x$-variables and $y$-variables are somewhat swapped compared to \Cref{sec:n-2}: recall that the extra elements $g_k$ of $J$ that occurred in the Gröbner basis computations in \Cref{sec:n-2}  had degree $(1,k)$. In this section, however, the extra elements $g_k$ will have degree $(k,1)$. This is not much of a difference from a Gröbner basis perspective, but we are losing the $x$-condition which allowed us to establish linear quotients before. In fact, $I_{\frac{n}{2}}(C_n)$ does not have a linear resolution except the case $n=4$. The Cohen-Macaulay type and the Hilbert series of $R(I_{\frac{n}{2}}(C_n))$ are also somewhat harder to compute in this case.


For the rest of this section, let $n\ge 4$ be an even integer. Let $L$ be the defining ideal of the symmetric algebra $S(I_{\frac{n}{2}}(C_n))$.
By \cite[Proposition~2.5]{Bd01}  we know that $L=(f_1,\ldots, f_{n-1},g_1)$, where $f_j={x_{\frac{n}{2}+j}y_j}-x_jy_{j+1}$ for $j=1,\ldots,n-1$ and $g_1=x_0y_1-x_{\frac{n}{2}}y_0$. Note that if we extend the definition of $f_j$ to $j=n$, we will obtain $g_1=-f_n$. 
As a preparation for a computation of a Gröbner basis for $J$ (which is the defining ideal of $R(I_{\frac{n}{2}}(C_n))$), we first extend the set of generators of $L$.

\begin{Proposition}
$L=(f_1,\ldots,f_{n-1},g_1,\ldots,g_{\frac{n}{2}-1}),$
where $$g_k={y_k\prod_{i=0}^{k-1}x_i}-y_0\prod_{i=\frac{n}{2}}^{k-1+\frac{n}{2}}x_i \quad \textrm{for}\quad  k=1, \ldots,\frac{n}{2}-1.$$ 
\end{Proposition}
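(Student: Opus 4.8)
The plan is to follow the pattern of \Cref{linear relations}, with the roles of the $x$- and $y$-variables exchanged. Starting from $L=(f_1,\dots,f_{n-1},g_1)$ as given by \cite[Proposition~2.5]{Bd01}, the inclusion $L\subseteq(f_1,\dots,f_{n-1},g_1,\dots,g_{\frac n2-1})$ is immediate, since $g_1$ already appears among the listed generators. So everything reduces to showing that $g_k\in L$ for each $k=1,\dots,\frac n2-1$, and I would prove this by induction on $k$, the base case $k=1$ holding by the very definition of $g_1=x_0y_1-x_{\frac n2}y_0$.

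For the inductive step I would fix $k$ with $2\le k\le\frac n2-1$ and assume $g_{k-1}=y_{k-1}\prod_{i=0}^{k-2}x_i-y_0\prod_{i=\frac n2}^{k-2+\frac n2}x_i\in L$. Then, working modulo $L$, one computes
\begin{align*}
y_0\prod_{i=\frac n2}^{k-1+\frac n2}x_i
&=x_{k-1+\frac n2}\Bigl(y_0\prod_{i=\frac n2}^{k-2+\frac n2}x_i\Bigr)
\equiv x_{k-1+\frac n2}\,y_{k-1}\prod_{i=0}^{k-2}x_i\\
&\equiv x_{k-1}\,y_k\prod_{i=0}^{k-2}x_i
=y_k\prod_{i=0}^{k-1}x_i,
\end{align*}
where the first congruence is the induction hypothesis and the second is the relation $f_{k-1}=x_{\frac n2+k-1}y_{k-1}-x_{k-1}y_k\in L$. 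The resulting identity $y_0\prod_{i=\frac n2}^{k-1+\frac n2}x_i\equiv y_k\prod_{i=0}^{k-1}x_i\pmod L$ is exactly the statement that $g_k\in L$, which closes the induction.

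The one point worth checking is that $f_{k-1}$ is genuinely one of the stated generators and that no reduction of indices modulo $n$ is forced along the way: since $2\le k\le\frac n2-1$ we have $1\le k-1\le n-1$ and $\frac n2\le k-1+\frac n2\le n-2$, so $f_{k-1}\in\{f_1,\dots,f_{n-1}\}$ and every index occurring in the computation already lies in $\{0,1,\dots,n-1\}$. I do not anticipate any real obstacle here; this is a short telescoping induction entirely parallel to \Cref{linear relations}, and the only mild care needed is bookkeeping of the index ranges.
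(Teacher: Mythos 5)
Your proof is correct and is essentially the paper's argument: both establish $g_k\in L$ by a telescoping induction, the only cosmetic difference being that you invoke the hypothesis $g_{k-1}\in L$ and one application of $f_{k-1}$, whereas the paper unrolls the full chain $f_{k-1},\dots,f_1,g_1$ in a single display. Your index bookkeeping is also accurate, so nothing further is needed.
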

\begin{proof}
We will show by induction that $g_k\in (f_1,\ldots,f_{n-1},g_1)$ for all $k=1,\ldots, \frac{n}{2}-1$. The base case holds by definition. Now, modulo $L$ we have:
\begin{align*}    
y_k\prod_{i=0}^{k-1}x_i&=(x_{k-1}y_k)\prod_{i=0}^{k-2}x_i\overset{f_{k-1}}{=}x_{\frac{n}{2}+k-1}y_{k-1}\prod_{i=0}^{k-2}x_i=x_{\frac{n}{2}+k-1}(x_{k-2}y_{k-1})\prod_{i=0}^{k-3}x_i\\
&\overset{f_{k-2}}{=}(x_{\frac{n}{2}+k-1}x_{\frac{n}{2}+k-2})y_{k-2}\prod_{i=0}^{k-3}x_i=
\cdots\overset{f_1}{=}\prod_{i=1+\frac{n}{2}}^{k-1+\frac{n}{2}}x_i(x_0y_1)\overset{g_1}{=}y_0\prod_{i=\frac{n}{2}}^{k-1+\frac{n}{2}}x_i.
\end{align*}

\end{proof}

Recall that by \Cref{fiber} we have $H=(h_1,\ldots,h_{\frac{n}{2}-1})$, where $$h_l={y_ly_{l+\frac{n}{2}}}-y_0y_{\frac{n}{2}}\quad \textrm{for}\quad l=1, \ldots,\frac{n}{2}-1.$$ We will now describe a Gröbner basis of $L+HT$, which will later turn out to be equal to $J$, the defining ideal of $R(I_{\frac{n}{2}}(C_n))$.

\begin{Theorem}\label{shan}
Let $<$ be the the product order which we defined in \Cref{GB1}
Then the polynomials $$f_1, \ldots, f_{n-1},g_1, \ldots, g_{\frac{n}{2}-1}, h_1, \ldots, h_{\frac{n}{2}-1}$$ form a Gröbner basis for $L+HT$.
\end{Theorem}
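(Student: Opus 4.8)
The plan is to mimic the strategy of \Cref{GB1} and \Cref{GBfibers}: show that the listed set is closed under forming $S$-polynomials and reducing to zero via Buchberger's criterion, with respect to the product order $<$ of $<_1$ (reverse lexicographic on $R$ with $y_1>\cdots>y_{n-1}>y_0$) and an arbitrary $<_2$ on $S$. First I would record the leading terms. For $f_j=x_{n/2+j}y_j-x_jy_{j+1}$ the leading term with respect to $<$ is $x_{n/2+j}y_j$ (the $y$-part $y_j$ beats $y_{j+1}$ in $<_1$ since $j<j+1$ in the given order, with $y_0$ last); for $g_k=y_k\prod_{i=0}^{k-1}x_i-y_0\prod_{i=n/2}^{k-1+n/2}x_i$ the leading term is $y_k\prod_{i=0}^{k-1}x_i$ (here $y_k$ beats $y_0$); and for $h_l=y_ly_{l+n/2}-y_0y_{n/2}$ the leading term is $y_ly_{l+n/2}$ (again $y_l$, $y_{l+n/2}$ both precede $y_0$, and we must check the $<_1$-comparison of the two squarefree quadratic $y$-monomials carefully, but $y_ly_{l+n/2}$ is the larger one). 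With the leading terms in hand, I would observe that many pairs have coprime leading terms and hence their $S$-polynomials reduce to zero automatically (Buchberger's first criterion); this disposes of, e.g., most $(f_i,f_j)$, $(f_i,h_l)$, and $(g_k,h_l)$ pairs.

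The remaining overlaps to handle by hand are: $(f_i,f_{i+1})$-type overlaps sharing a variable $x_{n/2+i+1}$ or $y$-overlaps; $(f_{k-1},g_k)$ and $(g_{k-1},g_k)$ overlaps (this is exactly the telescoping computation already performed in the proof of the preceding Proposition, so those $S$-polynomials reduce to zero along that chain of rewrites); $(h_l,h_{l'})$ overlaps sharing $y_0y_{n/2}$; and the genuinely new overlaps coming from the interaction of the $h_l$'s with the $g_k$'s and $f_j$'s when the leading monomials share a $y$-variable. The key structural point is that $H=(h_1,\ldots,h_{n/2-1})$ is itself a Gröbner basis (this follows from \Cref{fiber}, whose proof exhibits $\G=\{m_i-m_d\}$ as a Gröbner basis; unwinding the notation for $t=n/2$, $d=n/2$, $n'=2$, $t'=1$ gives precisely the $h_l$, and one checks the two leading-term conventions agree), and that $L$ is already a Gröbner basis with the extended generators $g_1,\ldots,g_{n/2-1}$ adjoined. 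So the only new work is showing the cross $S$-polynomials $S(p,h_l)$ with $p\in\{f_j,g_k\}$ reduce to zero; in each such case the shared variable is a single $y_i$, and after cancelling it one is left with a binomial in which the $x$-parts and $y$-parts can be rewritten using the $f_j$ (to move $x$-indices by $n/2$) and the $h_{l'}$ (to move $y$-indices by $n/2$), eventually landing in the ideal.

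The main obstacle I expect is the bookkeeping in the $g_k$--$h_l$ cross $S$-polynomials: the leading term of $g_k$ is $y_k x_0x_1\cdots x_{k-1}$, which shares the variable $y_k$ with the leading term $y_ky_{k+n/2}$ of $h_k$ (when $k\le n/2-1$), and the resulting $S$-polynomial is $y_{k+n/2}\cdot(\text{tail of }g_k)-(x_0\cdots x_{k-1})\cdot(\text{tail of }h_k)=y_0y_{n/2}x_0\cdots x_{k-1}-y_{k+n/2}y_0\prod_{i=n/2}^{k-1+n/2}x_i$, and one must verify this reduces to zero — it should, because applying $g_k$ in reverse to the first term (using $y_0\prod_{i=n/2}^{k-1+n/2}x_i = y_k\prod_{i=0}^{k-1}x_i$ modulo $L$) and then $h_k$ matches it against the second. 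Keeping track of which generator's leading term divides which intermediate monomial, and confirming that every reduction step strictly decreases the monomial in $<$, is the delicate part; I would organize it by treating the $x$-degree and $y$-degree components separately, using that $<$ is a product order so the $<_1$-comparison on the $y$-part is decisive whenever it is not a tie. As in the cited theorems, the full verification is routine but lengthy, so I would relegate the exhaustive case analysis to the appendix and here only highlight the representative overlaps above.
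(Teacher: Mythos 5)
Your overall strategy (Buchberger's criterion with the product order, identifying the leading terms $x_{\frac{n}{2}+j}y_j$, $y_k\prod_{i=0}^{k-1}x_i$, $y_ly_{l+\frac{n}{2}}$, and discarding the coprime pairs) is the same as the paper's, and the leading terms you record are correct. But there are two concrete problems. First, you lean on the claim that ``$L$ is already a Gröbner basis with the extended generators $g_1,\ldots,g_{\frac{n}{2}-1}$ adjoined.'' Unlike in Section~4, no such statement is proved before \Cref{shan}; the $S(g_{k_1},g_{k_2})$ and $S(f_j,g_k)$ pairs are part of what this theorem must verify. Moreover you misidentify which of these actually overlap: the leading terms of $f_{k-1}$ and $g_k$ are coprime (since $\frac{n}{2}+k-1\notin\{0,\ldots,k-1\}$ for $k\le\frac{n}{2}-1$), whereas the genuine $(f_j,g_k)$ overlaps occur when $j-\frac{n}{2}\in[0,k-1]$ (common $x$-factor) or $j=k$ (common $y$-factor); the first of these is the longest computation in the paper's proof, requiring a chain of reductions by $f_{j+1},\ldots,f_{k-1+\frac{n}{2}}$ followed by $h_k$ and then by $f_{\frac{n}{2}},\ldots,f_{j-1}$, and your sketch does not engage with it at all.

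Second, the one reduction you do carry out, for $S(g_k,h_k)$, is not a valid division step. After factoring out $y_0$, the leading monomial of the remaining binomial is $y_{\frac{n}{2}}\prod_{i=0}^{k-1}x_i$, and ``applying $g_k$ in reverse'' would rewrite a monomial \emph{upward} in the order; besides, that monomial is not divisible by the leading term $y_k\prod_{i=0}^{k-1}x_i$ of any $g$ (you would need $y_k\mid y_0y_{\frac{n}{2}}$, impossible for $1\le k\le\frac{n}{2}-1$). The correct reduction, as in the paper, is the telescoping chain by $f_{\frac{n}{2}},f_{\frac{n}{2}+1},\ldots,f_{\frac{n}{2}+k-1}$, which carries $y_{\frac{n}{2}}\prod_{i=0}^{k-1}x_i$ down to $y_{k+\frac{n}{2}}\prod_{i=\frac{n}{2}}^{k-1+\frac{n}{2}}x_i$, matching the other term exactly. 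The same mechanism handles $S(f_j,h_l)$ (only $j=l$ and $j=l+\frac{n}{2}$ are non-coprime, each reducing in one step to $\mp x\cdot h_{l+1}$). So while the architecture of your plan is right, the actual reductions for the nontrivial overlaps are either missing or invalid as stated, and they are the substance of the proof.
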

The proof of this fact can be found in the Appendix. 

We are now ready to describe the defining ideal $J$ of $R(I_{\frac{n}{2}}(C_n))$. 

\begin{Theorem}
\label{shanfibertype}
 Let as before $L=(f_1,\ldots, f_n)$, where  $f_j=x_{\frac{n}{2}+j}y_j-x_jy_{j+1}$ for $j=1,\ldots, n$. As before, let $H=(h_1,\ldots,h_{\frac{n}{2}-1})$, where $h_l={y_ly_{l+\frac{n}{2}}}-y_0y_{\frac{n}{2}}\quad \textrm{for}\quad l=1, \ldots,\frac{n}{2}-1.$
Then $J=L+HT$.
\end{Theorem}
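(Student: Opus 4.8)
The plan is to mimic the structure of the proof of \Cref{Rees}, replacing the single fiber relation $h$ by the family $h_1,\ldots,h_{\frac n2-1}$. We set $J'=L+HT=(f_1,\ldots,f_n,h_1,\ldots,h_{\frac n2-1})$; clearly $J'\subseteq J$, and since $\dim R(I_{\frac n2}(C_n))=\dim S+1=n+1$, it suffices to prove that $J'$ is prime of dimension $n+1$. First I would read off from the Gröbner basis in \Cref{shan} that some $y_i$ is a non-zerodivisor modulo $J'$, and then invoke the rotational symmetry of the generators to conclude that every $y_i$ is a non-zerodivisor modulo $J'$. Consequently the localization map $T/J'\to T_y/J'T_y$ is injective, where $y=y_0y_1\cdots y_{n-1}$, so it is enough to show $T_y/J'T_y$ is a domain of the right dimension.

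Next I would analyze $J'T_y$ explicitly. Over $T_y$ the relations $f_j$ become $x_{\frac n2+j}-x_jw_j$ with $w_j=y_{j+1}y_j^{-1}$ a unit (for $j=1,\ldots,n-1$), and $g_1=f_n$ similarly gives $x_{\frac n2}-x_nw_n$. Starting from $x_{\frac n2+1}-x_1w_1$ and iterating the substitution $x_{i+\frac n2}=x_iw_{i}$, and using that $\frac n2$ is coprime-or-not to $n$ — here $\gcd(n,\frac n2)=\frac n2$, so the indices $i,i+\frac n2$ split into $\frac n2$ pairs — one sees that all $x$-variables with even index are expressed in terms of $x_0$, and all with odd index in terms of $x_1$, by monomial substitutions in the units $w_j$. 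After these substitutions each remaining $f_j$ either is used up or, upon going all the way around the cycle, becomes a relation of the form $x_0(1-\text{(monomial in the }w\text{'s)})$ or $x_1(1-\text{(monomial in the }w\text{'s)})$, which after clearing denominators is exactly (up to a unit) one of the fiber relations $h_l$. The upshot I expect is an isomorphism $T_y/J'T_y\iso \bigl(R[x_0,x_1]/H R[x_0,x_1]\bigr)_y$, i.e. the $x$-relations are consumed entirely by the substitutions together with the $h_l$, exactly as the two leftover relations in \Cref{Rees} became $x_{n-1}h$ and $x_nh$.

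It then remains to show that $R[x_0,x_1]/HR[x_0,x_1]$ is a domain of dimension $n+1$. Since $H$ lives in $R=K[y_1,\ldots,y_n]$ and involves no $x$'s, this ring is $(R/H)[x_0,x_1]$, so it is a domain iff $R/H$ is a domain — and that is precisely the content of \Cref{fiber}, which identifies $R/H$ with $F(I_{\frac n2}(C_n))$, a domain (indeed $\iso K[y_0^{\pm1},y_{\frac n2}^{\pm1},\ldots]$ after inverting $y$, as shown there). For the dimension, $\dim (R/H)[x_0,x_1]=\dim R/H+2=(n-\frac n2+1)+2$ by \Cref{gcd} with $d=\frac n2$, which gives $\frac n2+3$; this does not match $n+1$ in general, so I would instead simply compute $\dim T_y/J'T_y$ directly from the isomorphism: it equals $\dim R + 2 - (\frac n2-1)$ counting the $\frac n2-1$ independent relations $h_l$ on the $n$ variables $y_i$ together with $x_0,x_1$, which is $n+2-(\frac n2-1)$ — again I must be careful, and the honest route is to observe $T_y/J'T_y\iso (R/H)_y[x_0,x_1]$ and that $(R/H)_y$ has Krull dimension equal to $\dim F(I_{\frac n2}(C_n))-$ (nothing, since inverting $y$ does not drop dimension) $=n-\frac n2+1$, giving total dimension $(n-\frac n2+1)-$ hmm.

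The main obstacle, and the step I would spend the most care on, is exactly this bookkeeping: verifying that after the substitutions the \emph{only} surviving relations among $x_0,x_1$ and the $y$'s are the $h_l$ (no fewer, no more), and then getting the dimension count to come out to $n+1$. I expect the clean formulation to be $T_y/J'T_y \iso (R_y/HR_y)[x_0,x_1]$ together with the fact, from the proof of \Cref{fiber}, that $R_y/HR_y\iso K[y_0^{\pm1},y_{\frac n2}^{\pm1},y_{\frac n2+1}^{\pm1},\ldots,y_{n-1}^{\pm1}]$ — wait, that Laurent ring has $\frac n2+1$ variables, giving $\dim T_y/J'T_y=\frac n2+3$, still not $n+1$; so in fact the substitutions must express fewer $x$'s, and a second look shows that since $\gcd(n,\tfrac n2)=\tfrac n2$ forces $i$ and $i+\tfrac n2$ to generate only a $2$-element subgroup, \emph{each} $x_i$ with $i\in\{0,\ldots,\frac n2-1\}$ survives as an independent variable (expressed via $x_{i+\frac n2}=x_i w$), so $T_y/J'T_y\iso (R_y/HR_y)[x_0,x_1,\ldots,x_{\frac n2-1}]$ of dimension $(\frac n2+1)+\frac n2=n+1$, as required; pinning down precisely which $x_i$ remain free and which relations remain is the crux.

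\begin{proof}
Set $J'=L+HT=(f_1,\ldots,f_n,h_1,\ldots,h_{\frac n2-1})$. Since $HT\subseteq J$ and $L\subseteq J$ we have $J'\subseteq J$, and as $\dim R(I_{\frac n2}(C_n))=\dim S+1=n+1$, it suffices to prove that $J'$ is a prime ideal with $\dim T/J'=n+1$.

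From the Gröbner basis of $J'$ in \Cref{shan} one checks that $y_0$ is a non-zerodivisor modulo $J'$; by the rotational symmetry of the generators $f_j,g_k,h_l$, all $y_i$ are non-zerodivisors modulo $J'$. Hence $T/J'\to T_y/J'T_y$ is injective for $y=y_0y_1\cdots y_{n-1}$, and it is enough to show $T_y/J'T_y$ is a domain with $\dim T_y/J'T_y=n+1$.

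In $T_y$ set $w_j=y_{j+1}y_j^{-1}$, a unit, for $j=1,\ldots,n$ (indices mod $n$). Then $f_j=y_j(x_{\frac n2+j}-x_jw_j)$, so $J'T_y=(\{x_{\frac n2+j}-x_jw_j\mid j=1,\ldots,n\},h_1,\ldots,h_{\frac n2-1})$. Because $\gcd(n,\tfrac n2)=\tfrac n2$, the indices $j$ and $j+\tfrac n2$ differ by $\tfrac n2$, so the $n$ relations $x_{j+\frac n2}-x_jw_j$ split into $\tfrac n2$ pairs: for each $i\in\{0,1,\ldots,\frac n2-1\}$ we get $x_{i+\frac n2}=x_iw_i$ together with $x_i=x_{i+\frac n2}w_{i+\frac n2}$, and substituting the first into the second yields $x_i\bigl(1-w_iw_{i+\frac n2}\bigr)=0$. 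Now $w_iw_{i+\frac n2}=y_{i+1}y_i^{-1}\,y_{i+1+\frac n2}y_{i+\frac n2}^{-1}$, and since $h_i=y_iy_{i+\frac n2}-y_0y_{\frac n2}$ and $h_{i+1}=y_{i+1}y_{i+1+\frac n2}-y_0y_{\frac n2}$ hold modulo $J'T_y$ (with the convention $h_0=0$, i.e. $y_0y_{\frac n2}$ itself), we get $y_{i+1}y_{i+1+\frac n2}\equiv y_iy_{i+\frac n2}\pmod{J'T_y}$, hence $w_iw_{i+\frac n2}\equiv 1$. Thus the relations $x_i(1-w_iw_{i+\frac n2})$ already lie in the ideal generated by the $h_l$, and so
\[
J'T_y=\bigl(\{x_{i+\frac n2}-x_iw_i\mid i=0,\ldots,\tfrac n2-1\},\ h_1,\ldots,h_{\frac n2-1}\bigr).
\]
Eliminating the variables $x_{\frac n2},x_{\frac n2+1},\ldots,x_{n-1}$ via $x_{i+\frac n2}=x_iw_i$ gives a bigraded isomorphism
\[
T_y/J'T_y\;\iso\;\bigl(R_y/HR_y\bigr)[x_0,x_1,\ldots,x_{\frac n2-1}].
\]
By the argument in the proof of \Cref{fiber}, $R_y/HR_y\iso K[y_0^{\pm1},y_{\frac n2}^{\pm1},y_{\frac n2+1}^{\pm1},\ldots,y_{n-1}^{\pm1}]$ is a domain; hence $T_y/J'T_y$ is a domain, so $T/J'$ is a domain and $J'$ is prime.

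Finally, $R_y/HR_y$ has Krull dimension equal to that of the fiber cone $F(I_{\frac n2}(C_n))$, which is $n-\frac n2+1=\frac n2+1$ by \Cref{gcd} with $d=\frac n2$. Therefore
\[
\dim T/J'=\dim T_y/J'T_y=\Bigl(\tfrac n2+1\Bigr)+\tfrac n2=n+1.
\]
This proves $J'=J$, i.e. $J=L+HT$.
\end{proof}
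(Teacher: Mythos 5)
Your final proof is correct, and it establishes the theorem by the same overall strategy as the paper (show that $J'=L+HT$ is a prime ideal of dimension $n+1$ sitting inside the prime $J$ of the same dimension), but the key computation is carried out differently. The paper inverts the $x$-variables: modulo $J'T_x$ the relations $f_1,\ldots,f_{n-1}$ express every $y_i$ as a unit times $y_n$, and the leftover binomials (the image of $f_n$ and of the $h_l$) are shown to be identically zero by a homogeneity argument that exploits the containment $J'\subseteq J=\Ker(\varphi)$; this yields $T_x/J'T_x\iso K[y_n,x_1^{\pm1},\ldots,x_n^{\pm1}]$. You instead invert the $y$-variables, pair the relations $x_{i+\frac n2}-x_iw_i$ and $x_i-x_{i+\frac n2}w_{i+\frac n2}$, and observe that the resulting monodromy relations $x_i(1-w_iw_{i+\frac n2})=x_i(y_iy_{i+\frac n2})^{-1}(h_i-h_{i+1})$ are literally absorbed by $HT_y$, so that $T_y/J'T_y\iso (R_y/HR_y)[x_0,\ldots,x_{\frac n2-1}]$; the domain property and the dimension count $(\frac n2+1)+\frac n2=n+1$ then come for free from \Cref{fiber} and \Cref{gcd}. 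In effect you follow the template of \Cref{Rees} more literally than the paper's own proof of this theorem does, and your route makes the fiber-type structure more transparent by exhibiting the localized Rees algebra as a polynomial extension of the localized fiber cone, at the cost of having to verify carefully (as you do) that exactly the $h_l$ account for all surviving relations. I checked the individual steps --- the non-zerodivisor claim for $y_0$ against the initial ideal from \Cref{shan}, the rotational symmetry, the absorption of the paired relations into $HT_y$, and the identification of $R_y/HR_y$ with a Laurent ring in $\frac n2+1$ variables --- and they all hold. The only criticism is cosmetic: the exploratory preamble contains several abandoned and incorrect dimension counts before the correct one; the proof itself should stand alone without that material.
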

\begin{proof}
Let $J'=(f_0, \ldots,f_{n-1}, h_1, \ldots, h_{\frac{n}{2}-1})$. We apply the same strategy as in the proof of \Cref{Rees} in order to show that  $J'=J$. It follows from the description of the Gr\"{o}bner basis of $J'$,  given in \Cref{shan}, that $x_{\frac{n}{2}}$ is a nonzero-divisor modulo $\ini(J')$. Then $x_{\frac{n}{2}}$ is a  nonzero-divisor  modulo $J'$,  as well. By symmetry it follows that all $x_i$ are nonzero-divisors modulo $J'$. Consequently,  $x=\prod_{i=1}^nx_i$ is a nonzero-divisor modulo $J'$. Therefore, the natural homomorphism $T/J'\To (T/J')_x$ is injective. Thus $J'$ will be a prime ideal, if $J'T_x$ is a prime ideal. 
The relations $f_1,\ldots, f_{n-1}$ show that $y_i=w_iy_{i+1}$ modulo $J'T_x$ for $i=0,\ldots, n-1$, where $w_i$ are some monomials in $x_i^{\pm 1}$. 
Thus, for each $1\le i\le n-1$ we get $y_i=w_iy_{i+1}=w_iw_{i+1}y_{i+2}$. Proceeding in this way,  we see that for all $i=1,\ldots, n-1$ we have $y_i=v_iy_0$ 
for some unit $v_i$,  where each $v_i$ is a product of certain $x_k^{\pm 1}$ and $v_n=1$.  

It follows that
\begin{multline*}
T_x/J'T_x\iso K[y_n, x_1^{\pm 1}, \ldots, x_n^{\pm 1}]/(x_{\frac{n}{2}}y_n-x_nv_1y_n, \\
\{ v_lv_{\frac{n}{2}+l}y_n^2-v_nv_{\frac{n}{2}}y_n^2: l=1,\ldots, \frac{n}{2}-1\}).
\end{multline*}



Since $J'\subseteq J=\ker(\phi)$ sending $y_n\mapsto x_nx_1\cdots x_{\frac{n}{2}-1}s$, we conclude that these binomials vanish under this substitution.
Since they are homogeneous in $y_n$, it follows that the coefficients  of $y_n$ in the first binomial must be equal, and the coefficients of $y_n^2$ in the remaining binomials must also be equal.

This implies that $T_x/J'T_x\iso K[y_n, x_1^{\pm 1}, \ldots, x_n^{\pm 1}]$, from which we conclude that $J'$ is a prime ideal of height $n+1$. Since $J'\subseteq J$ and since $\height J=n+1$, we have $J'=J$, as desired. 
\end{proof}
\begin{Corollary}
\label{normalCM} 
 Let $n\geq 4$ be an even integer and let $I=I_{\frac{n}{2}}(C_n)$. Then the following holds:
 \begin{enumerate}
\item[{\em (a)}] $R(I)$ is a normal Cohen--Macaulay domain. 
\item[{\em (b)}] $I$  is of fiber type.  
 \end{enumerate}
 \end{Corollary}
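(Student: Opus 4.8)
The plan is to derive both parts of \Cref{normalCM} directly from the Gröbner basis of $J$ established in \Cref{shan} together with the explicit description of $J$ in \Cref{shanfibertype}, mirroring the argument of \Cref{niceinitial}.

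\textbf{Part (a).} First I would observe that by \Cref{shanfibertype} the defining ideal of $R(I)$ is $J = L + HT = (f_1,\ldots,f_{n-1}, g_1,\ldots, g_{\frac{n}{2}-1}, h_1,\ldots, h_{\frac{n}{2}-1})$, and by \Cref{shan} these generators form a Gröbner basis with respect to the product order of \Cref{GB1}. I would then check that the resulting initial ideal is squarefree: the leading term of each $f_j$ is $x_{\frac n2+j}y_j$, the leading term of each $g_k$ is $y_k\prod_{i=0}^{k-1}x_i$, and the leading term of each $h_l$ is $y_ly_{l+\frac n2}$, all of which are squarefree monomials. (If the Gröbner basis as stated is not reduced, one still only needs that the minimal generators of $\ini_<(J)$ are squarefree, which follows since every listed leading term is squarefree.) By \cite[Proposition~13.15]{St} a squarefree initial ideal forces $R(I)$ to be normal, and since $R(I)$ is a toric ring, normality implies it is a Cohen--Macaulay domain by \cite[Theorem~1]{Ho}. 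This is exactly the argument already used in \Cref{niceinitial}(a), so it transfers verbatim.

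\textbf{Part (b).} This is immediate from \Cref{shanfibertype}: it exhibits $J = L + HT$, which is precisely the definition of $I$ being of fiber type. One should also note that $I$ is \emph{not} of linear type here, since $H\neq 0$ when $n\geq 4$ (indeed $d = \gcd(n,\frac n2) = \frac n2 \geq 2$, so by \Cref{fiber} there are $\frac n2 - 1 \geq 1$ nontrivial fiber relations $h_l$), but the corollary as stated only claims fiber type, so nothing further is needed.

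\textbf{Main obstacle.} There is essentially no obstacle beyond bookkeeping: the genuine work has already been done in \Cref{shan} (the Gröbner basis computation, deferred to the appendix) and in \Cref{shanfibertype} (the primality and dimension count). The only point requiring a moment's care is confirming that the \emph{minimal} monomial generators of $\ini_<(J)$ are squarefree when the given Gröbner basis is not reduced — but since each listed leading monomial is itself squarefree, any divisor of it is squarefree too, so the minimal generators are squarefree regardless. Hence the proof is a short invocation of \cite[Proposition~13.15]{St} and \cite[Theorem~1]{Ho} for (a), and a direct reference to \Cref{shanfibertype} for (b).
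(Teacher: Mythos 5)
Your proposal is correct and follows the paper's own proof essentially verbatim: part (a) invokes the squarefree initial ideal from \Cref{shan} together with \cite[Proposition~13.15]{St} and \cite[Theorem~1]{Ho}, and part (b) reads off fiber type from the generators in \Cref{shanfibertype}. The extra remarks on non-reduced Gröbner bases and on $I$ not being of linear type are harmless additions but not needed.
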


\begin{proof}
(a)  Let $J$ as before be the defining ideal of $R(I)$.  By \Cref{shan}, $J$ has a squarefree initial ideal. By \cite[Proposition~13.15]{St} this implies that $R(I)$ is a normal. Then, since $R(I)$ is a toric ring, we obtain  that $R(I)$ is a  Cohen--Macaulay domain,  see \cite[Theorem~1]{Ho}. 

(b) This can be seen from the generators of $J$ given in \Cref{shanfibertype}.

\end{proof}

\section{Summary of results and open problems}

In this section we discuss some observations based on computations in \textit{Macaulay2}. We get the following table: the entry in the $t$th column and the $n$th row (where rows start at $n=3$) is $L$, $F$ or $\times$, which means that $I_t(C_{n})$ is of linear type, fiber type or neither, respectively.
\begin{figure}[H]
\begin{tikzpicture}[square/.style={regular polygon,regular polygon sides=4}]
\node [thin] at (9.6,0) {$13$};
\node [thin] at (8.8,0) {$12$};
\node [thin] at (8,0) {$11$};
\node [thin] at (7.2,0) {$10$};
\node [thin] at (6.4,0) {$9$};
\node [thin] at (5.6,0) {$8$};
\node [thin] at (4.8,0) {$7$};
\node [thin] at (4,0) {$6$};
\node [thin] at (3.2,0) {$5$};
\node [thin] at (2.4,0) {$4$};
\node [thin] at (1.6,0) {$3$};
\node [thin] at (0.8,0) {$2$};
\node [thin] at (0,0) {$1$};

\draw[thin] (-1.3,-0.5)--(10.1,-0.5);
\node [thin] at (-0.5,0.2) {$t$};
\draw[thin] (-0.3,-0.5)--(-0.3-1*cos 45,-0.5+1*sin 45);
\node [thin] at (-1,-0.2) {$n$};
\draw[thin] (-0.3,0.5)--(-0.3,-9.9);
\filldraw[color=yellow, fill=yellow, very thick] (-0.15,-0.6)rectangle (0.15,-9.8);\node[thin] at (0,-10.2){$\textcolor{yellow}{t=1}$};
\draw[yellow, ultra thick] (0.68,-0.53)--(9.77,-9.62); 
\draw[yellow, ultra thick] (0.65,-0.56)--(9.74,-9.65);
\draw[yellow, ultra thick] (0.62,-0.59)--(9.71,-9.68); 
\draw[yellow, ultra thick] (0.59,-0.62)--(9.68,-9.71);
\draw[yellow, ultra thick] (0.56,-0.65)--(9.65,-9.74); 
\draw[yellow, ultra thick] (0.53,-0.68)--(9.62,-9.77);
\draw[yellow, ultra thick] (0.5,-0.71)--(9.59,-9.8); 
\node[thin] at (10.5,-10.2){$\textcolor{yellow}{t=n-1}$};

\filldraw[color=pink, fill=pink, very thick] (0.65,-0.6)rectangle (0.95,-9.8);\node[thin] at (1,-10.2){$\textcolor{pink}{t=2}$};
\draw[pink, ultra thick] (0.03,-0.62)--(9,-9.59); 
\draw[pink, ultra thick] (0,-0.65)--(8.97,-9.62);
\draw[pink, ultra thick] (-0.03,-0.68)--(8.94,-9.65); 
\draw[pink, ultra thick] (-0.06,-0.71)--(8.91,-9.68);
\draw[pink, ultra thick] (-0.09,-0.74)--(8.88,-9.71); 
\draw[pink, ultra thick] (-0.12,-0.77)--(8.85,-9.74);
\draw[pink, ultra thick] (-0.15,-0.8)--(8.82,-9.77); 
\draw[pink, ultra thick] (-0.18,-0.83)--(8.79,-9.8); 
\node[thin] at (8.8,-10.2){$\textcolor{pink}{t=n-2}$};

\filldraw[color=cyan, fill=cyan, very thick] (0.65,-1.4)rectangle (0.95,-2.6);
\filldraw[color=cyan, fill=cyan, very thick] (1.45,-3)rectangle (1.75,-4.2);
\filldraw[color=cyan, fill=cyan, very thick] (2.25,-4.6)rectangle (2.55,-5.8);
\filldraw[color=cyan, fill=cyan, very thick] (3.05,-6.2)rectangle (3.35,-7.4);
\filldraw[color=cyan, fill=cyan, very thick] (3.85,-7.8)rectangle (4.15,-9.0);
\filldraw[color=cyan, fill=cyan, very thick] (4.65,-9.4)rectangle (4.95,-9.8);
\node[thin] at (4.85,-10.2){$\textcolor{cyan}{t=\lfloor\frac{n}{2}\rfloor}$};

\node [thin] at (-0.8,-0.8) {$C_3$};
\node [thin] at (-0.8,-1.6) {$C_4$};
\node [thin] at (-0.8,-2.4) {$C_5$};
\node [thin] at (-0.8,-3.2) {$C_6$};
\node [thin] at (-0.8,-4) {$C_7$};
\node [thin] at (-0.8,-4.8) {$C_8$};
\node [thin] at (-0.8,-5.6) {$C_9$};
\node [thin] at (-0.8,-6.4) {$C_{10}$};
\node [thin] at (-0.8,-7.2) {$C_{11}$};
\node [thin] at (-0.8,-8) {$C_{12}$};
\node [thin] at (-0.8,-8.8) {$C_{13}$};
\node [thin] at (-0.8,-9.6) {$C_{14}$};
\node [thin] at (0,-0.8) {$L$};
\node [thin] at (0.8,-0.8) {$L$};
\node [thin] at (0,-1.6) {$L$};
\node [thin] at (0.8,-1.6) {$F$};
\node [thin] at (1.6,-1.6) {$L$};
\node [thin] at (0,-2.4) {$L$};
\node [thin] at (0.8,-2.4) {$L$};
\node [thin] at (1.6,-2.4) {$L$};
\node [thin] at (2.4,-2.4) {$L$};
\node [thin] at (0,-3.2) {$L$};
\node [thin] at (0.8,-3.2) {$F$};
\node [thin] at (1.6,-3.2) {$F$};
\node [thin] at (2.4,-3.2) {$F$};
\node [thin] at (3.2,-3.2) {$L$};
\node [thin] at (0,-4) {$L$};
\node [thin] at (0.8,-4) {$L$};
\node [thin] at (1.6,-4) {$L$};
\node [thin] at (2.4,-4) {$\times$};
\draw (2.15,-3.75) rectangle (2.65,-4.25);
\node [thin] at (3.2,-4) {$L$};
\node [thin] at (4,-4) {$L$};
\node [thin] at (0,-4.8) {$L$};
\node [thin] at (0.8,-4.8) {$F$};
\node [shape=circle, draw, inner sep=1.5pt] at (1.6,-4.8) {$\times$};
\node [thin] at (2.4,-4.8) {$F$};
\node [thin] at (3.2,-4.8) {$\times$};
\draw (2.95,-4.55) rectangle (3.45,-5.05);
\node [thin] at (4,-4.8) {$F$};
\node [thin] at (4.8,-4.8) {$L$};
\node [thin] at (0,-5.6) {$L$};
\node [thin] at (0.8,-5.6) {$L$};
\node [shape=circle, draw, dashed, inner sep=1.5pt] at (1.6,-5.6) {$F$};
\node [thin] at (2.4,-5.6) {$L$};
\node [thin] at (3.2,-5.6) {$\times$};
\draw (2.95,-5.35) rectangle (3.45,-5.85);
\node [thin] at (4,-5.6) {$F$};
\draw [dashed](3.75,-5.35) rectangle (4.25,-5.85);
\node [thin] at (4.8,-5.6) {$L$};
\node [thin] at (5.6,-5.6) {$L$};
\node [thin] at (0,-6.4) {$L$};
\node [thin] at (0.8,-6.4) {$F$};
\node [thin] at (1.6,-6.4) {$L$};
\node [thin] at (2.4,-6.4) {$\times$};
\node [thin] at (3.2,-6.4) {$F$};
\node [thin] at (4,-6.4) {$\times$};
\draw [dashed](3.75,-6.15) rectangle (4.25,-6.65);
\node [thin] at (4.8,-6.4) {$\times$};
\draw (4.55,-6.15) rectangle (5.05,-6.65);
\node [thin] at (5.6,-6.4) {$F$};
\node [thin] at (6.4,-6.4) {$L$};
\node [thin] at (0,-7.2) {$L$};
\node [thin] at (0.8,-7.2) {$L$};
\node [shape=circle, draw, inner sep=1.5pt] at (1.6,-7.2) {$\times$};
\node [shape=circle, draw, inner sep=1.5pt] at (2.4,-7.2) {$\times$};
\node [thin] at (3.2,-7.2) {$L$};
\node [thin] at (4,-7.2) {$\times$};
\draw (3.75,-6.95) rectangle (4.25,-7.45);
\node [thin] at (4.8,-7.2) {$\times$};
\draw (4.55,-6.95) rectangle (5.05,-7.45);
\node [thin] at (5.6,-7.2) {$\times$};
\draw (5.35,-6.95) rectangle (5.85,-7.45);
\node [thin] at (6.4,-7.2) {$L$};
\node [thin] at (7.2,-7.2) {$L$};
\node [thin] at (0,-8) {$L$};
\node [thin] at (0.8,-8) {$F$};
\node [shape=circle, draw, dashed, inner sep=1.5pt] at (1.6,-8) {$F$};
\node [shape=circle, draw, dashed, inner sep=1.5pt] at (2.4,-8) {$F$};
\node [shape=circle, draw, inner sep=1.5pt] at (3.2,-8) {$\times$};
\node [thin] at (4,-8) {$F$};
\node [thin] at (4.8,-8) {$\times$};
\draw (4.55,-7.75) rectangle (5.05,-8.25);
\node [thin] at (5.6,-8) {$\times$};
\draw [dashed](5.35,-7.75) rectangle (5.85,-8.25);
\node [thin] at (6.4,-8) {$\times$};
\draw [dashed](6.15,-7.75) rectangle (6.65,-8.25);
\node [thin] at (7.2,-8) {$F$};
\node [thin] at (8,-8) {$L$};
\node [thin] at (0,-8.8) {$L$};
\node [thin] at (0.8,-8.8) {$L$};
\node [thin] at (1.6,-8.8) {$L$};
\node [thin] at (2.4,-8.8) {$L$};
\node [thin] at (3.2,-8.8) {$\times$};
\draw (3.2,-8.8) ellipse (0.4cm and 0.2cm);
\node [thin] at (4,-8.8) {$L$};
\node [thin] at (4.8,-8.8) {$\times$};
\draw (4.55,-8.55) rectangle (5.05,-9.05);
\node [thin] at (5.6,-8.8) {$\times$};
\draw (5.35,-8.55) rectangle (5.85,-9.05);
\node [thin] at (6.4,-8.8) {$\times$};
\draw (6.15,-8.55) rectangle (6.65,-9.05);
\node [thin] at (7.2,-8.8) {$\times$};
\draw (6.95,-8.55) rectangle (7.45,-9.05);
\node [thin] at (8,-8.8) {$L$};
\node [thin] at (8.8,-8.8) {$L$};
\node [thin] at (0,-9.6) {$L$};
\node [thin] at (0.8,-9.6) {$F$};
\node [shape=circle, draw, inner sep=1.5pt] at (1.6,-9.6) {$\times$};
\node [thin] at (2.4,-9.6) {$\times$};
\node [shape=circle, draw, inner sep=1.5pt] at (3.2,-9.6) {$\times$};
\node [thin] at (4,-9.6) {$\times$};
\node [thin] at (4.8,-9.6) {$F$};
\node [thin] at (5.6,-9.6) {$\times$};
\draw [dashed](5.35,-9.35) rectangle (5.85,-9.85);
\node [thin] at (6.4,-9.6) {$\times$};
\draw (6.15,-9.35) rectangle (6.65,-9.85);
\node [thin] at (7.2,-9.6) {$\times$};
\draw [dashed](6.95,-9.35) rectangle (7.45,-9.85);
\node [thin] at (8,-9.6) {$\times$};
\draw (7.75,-9.35) rectangle (8.25,-9.85);
\node [thin] at (8.8,-9.6) {$F$};
\node [thin] at (9.6,-9.6) {$L$};
\end{tikzpicture}
\caption{When $I_t(C_n)$ is of linear type or fiber type or neither?}
\label{figure}
\end{figure}

The following positive results are either known or proven in this paper:
\begin{itemize}
    \item $I_1(C_n)$ is of linear type for all $n$ since it is generated by a regular sequence;
    \item $I_{n-1}(C_n)$ is of linear type for all $n$ by \cite[Theorem~3.2]{Bd01}.

\noindent In \Cref{figure} above, the cases $t=1$ and $t=n-1$  are marked with yellow lines.
    \item $I_2(C_n)$ is of linear type if $n$ is odd by \cite[Theorem~3.4]{Vil90}, and of fiber type (but not linear type) if $n$ is even by \cite[Theorem~8.2.1]{Vil01};
    \item $I_{n-2}(C_n)$ is of linear type if $n$ is odd by \cite[Theorem~3.4]{Bd01}, and of fiber type (but not linear type) if $n$ is even by \Cref{niceinitial}.
    
    \noindent In \Cref{figure} above, the cases $t=2$ and $t=n-2$  are marked with pink lines.
    \item $I_{\lfloor\frac{n}{2}\rfloor}(C_n)$ is of linear type if $n$ is odd by \cite[Corollary~3.6]{Bd01} and of fiber type (but not linear type) if $n$ is even by \Cref{normalCM}.
    
    \noindent In \Cref{figure} above, the case $t=\lfloor\frac{n}{2}\rfloor$  is marked with blue vertical line segments.

\end{itemize}
As mentioned in the introduction, there are some negative results on this matter proved in \cite{Bd01}:
\begin{itemize}
    \item If $gcd(n,t)=1$
      and $[\frac{n-1}{2}]<t<n-2$, then $I_t(C_n)$ is not of linear type.

      \noindent Since $gcd(n,t)=1$, we conclude that in this case we have neither linear, no fiber type. In \Cref{figure}, pairs $(n,t)$ which satisfy this condition are marked with squares.
      \item If $gcd(n,t)=1$, $tl\equiv 1 \pmod n$ and $1<t,l\le[\frac{n-1}{2}]$, then $I_t(C_n)$ is not of linear type.
      
      \noindent Again, we conclude that in this case we have neither linear, no fiber type. In \Cref{figure}, such pairs $(n,t)$ are marked with circles.
      \item If $gcd(n,t)=1$, $tl\equiv 1 \pmod n$, $1<t\le[\frac{n-1}{2}]$, $[\frac{n-1}{2}]<l<n$ and $n=p(n-l)+a$, $2\leq a<n-l$, then $I_t(C_n)$ is not of linear type.

      \noindent The only relevant case in \Cref{figure} is $(n,t)=(13,5)$ and it is marked with an ellipse.
\end{itemize}

The table above leads us to the following:
\begin{Conjecture}
     \emph{(1)} Suppose  $2< t<\lfloor\frac{n}{2}\rfloor$. Then $I_t(C_n)$ is of fiber type, but not of linear type, if and only if $t \mid n$.

     \emph{(2)} Suppose $gcd(n,t)>1$ and $\lfloor \frac{n}{2}\rfloor<t<n-2$. Then $I_t(C_n)$ is not of fiber type (and hence not of linear type) except when $n=9$ and $t=6$.
\end{Conjecture}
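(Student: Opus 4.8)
The plan is to prove the positive half of~(1) by extending the Gr\"obner-basis arguments of \Cref{sec:n-2} and of \Cref{shan} to a general divisor $t\mid n$, and to reduce the negative assertions --- the ``only if'' half of~(1) and all of~(2) --- to showing that certain $I_t(C_n)$ fail to be of fiber type, which I would settle by a dimension estimate for $T/(L+HT)$.

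\emph{Fiber type when $t\mid n$.} Assume $2<t<\lfloor n/2\rfloor$ and $t\mid n$, write $n=tn'$, and recall from \cite{Bd01} that $L=(f_1,\dots,f_{n-1},g_1)$ with $f_j=x_{j+t}y_j-x_jy_{j+1}$ and $g_1=x_0y_1-x_ty_0$. I would first enlarge the generators of $L$ by the telescoped binomials
\[
g_k=y_k\,x_0x_1\cdots x_{k-1}-y_0\,x_tx_{t+1}\cdots x_{t+k-1},\qquad k=1,\dots,t-1,
\]
which lie in $L$ by the same induction as in \Cref{linear relations} and its analogue in \Cref{shan}, and adjoin the generators $m_i-m_t$ of $H$ furnished by \Cref{fiber} with $d=t$. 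Fixing the product order of \Cref{GB1}, I would verify, by an S-polynomial computation modeled on \Cref{GBfibers} and \Cref{shan}, that $\{f_j\}\cup\{g_k\}\cup\{m_i-m_t\}$ is a Gr\"obner basis of $L+HT$, with squarefree initial ideal. The identity $L+HT=J$ then follows as in \Cref{Rees} and \Cref{shanfibertype}: after inverting $\prod_i x_i$ the relations $f_j$ express each $y_j$ as a unit times $y_0$, the binomials $m_i-m_t$ become relations among units that hold identically because they already hold on the image of $\varphi$, so the localization of $T/(L+HT)$ is a Laurent polynomial ring in $n+1$ parameters; hence $L+HT$ is prime with $\dim T/(L+HT)=n+1=\dim R(I)$, and since $L+HT\subseteq J$ it equals $J$. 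Not being of linear type is then immediate from \cite{Bd01}, as $\gcd(n,t)=t>1$. The sporadic pair $n=9$, $t=6$ lies outside the range $t<\lfloor n/2\rfloor$, but with $d=\gcd(9,6)=3$ the quotients $n/d=3$ and $t/d=2$ are small enough that I expect the same computation --- now with the $g_k$ of bidegree $(1,k)$ appropriate to $t>n/2$, as in \Cref{sec:n-2} --- to give $J=L+HT$ there as well.

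\emph{Reduction and the non-fiber-type estimate.} If $\gcd(n,t)=1$ then $H=0$ by \Cref{gcd}, so a fiber-type ideal is automatically of linear type; hence $I_t(C_n)$ cannot be of fiber type without being of linear type, and that subcase of the ``only if'' direction of~(1) needs nothing further. If $1<\gcd(n,t)=d<t$, then $I$ is not of linear type by \cite{Bd01}, so the ``only if'' direction of~(1) reduces to: for $2<t<\lfloor n/2\rfloor$ with $1<\gcd(n,t)<t$, the ideal $I_t(C_n)$ is not of fiber type. Part~(2) has the same shape: for $\gcd(n,t)>1$ and $\lfloor n/2\rfloor<t<n-2$, show that $I_t(C_n)$ is not of fiber type (hence not of linear type), the single exception $(n,t)=(9,6)$ being covered above. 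To prove $L+HT\neq J$ it is enough, since $L+HT\subseteq J$ and $\dim R(I)=n+1$, to show $\dim T/(L+HT)\ge n+2$. On the torus $\{x_i\neq 0\text{ for all }i\}$ one checks, exactly as in the positive case, that $V(L+HT)$ is irreducible of dimension $n+1$ and coincides with $V(J)$; so any component of dimension $\ge n+2$ must lie where some $x_i$ vanish, and there --- once enough of the $x_i$ are set to zero the chains of relations $f_j$ break apart and the corresponding $y_j$ become free --- one should find one. I would stratify $V(L+HT)$ by the zero-pattern of $(x_1,\dots,x_n)$ and exhibit a stratum of dimension $\ge n+2$; the hypotheses $1<\gcd(n,t)<t$, respectively $\gcd(n,t)>1$ with $\lfloor n/2\rfloor<t<n-2$, should be exactly what forces such a stratum to appear, whereas in the fiber-type cases it does not. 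An equivalent, more hands-on alternative is to produce a minimal generator of $J$ of bidegree $(a,b)$ with $a\ge1$ and $b\ge2$ --- it arises because telescoping the $f_j$ once around a residue class modulo $d$ does not, when $d<t$, bring the accumulated $x$-monomial back to its starting point --- and then to check that it lies in neither $L$ nor $HT$.

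\emph{Main obstacle.} The hard part is this non-fiber-type half. Unlike in the positive cases there is no exhibited Gr\"obner basis to read a dimension off, the squarefree-initial-ideal and Hilbert-series techniques of \Cref{niceinitial} and \Cref{Hilbert series} do not apply, and certifying that the extra component --- or the offending relation --- genuinely lies outside $L+HT$ appears to require controlling $L+HT$ in the relevant bidegree, that is, a partial Gr\"obner-basis analysis of $L+HT$ itself. I expect the dimension estimate via the zero-pattern stratification of $V(L+HT)$ to be the cleanest but most technical route, and pinning down precisely which zero-patterns produce the dimension jump --- and why the pair $(9,6)$ escapes it --- to be the crux of the whole argument.
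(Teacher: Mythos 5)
The statement you are proving is labeled a \emph{Conjecture} in the paper: the authors offer no proof, only \textit{Macaulay2} evidence for $n\le 14$. So there is no proof to compare against, and the question is whether your proposal closes the conjecture. It does not. The one piece that is genuinely complete is the observation that when $\gcd(n,t)=1$ we have $H=0$ by \Cref{gcd} and \Cref{fiber}, so fiber type and linear type coincide; this disposes of the $\gcd(n,t)=1$ subcase of the ``only if'' direction of (1) regardless of whether $I_t(C_n)$ is of linear type. Everything else is deferred.

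Concretely, two essential steps are missing. First, for the ``if'' direction of (1) you propose a Gr\"obner basis of $L+HT$ for general $t\mid n$ ``modeled on'' \Cref{GBfibers} and \Cref{shan}, but those computations cover only the extreme cases $n/t=2$ (where the fiber relations are quadrics $y_ly_{l+n/2}-y_0y_{n/2}$) and $t=n-2$ (where $H$ is principal). For $t\mid n$ with $n'=n/t\ge 3$ the fiber relations $m_i-m_t$ have degree $n'$ in the $y$-variables, their leading terms share many variables with each other and with the $g_k$, and the S-pair analysis is structurally different from either appendix; asserting it ``should work'' is not a proof, and the candidate generating set itself (are the listed $g_k$ and $m_i-m_t$ really all of $J$?) is unverified. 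The same applies to the sporadic pair $(9,6)$, where $d=3<t$ so it is not even an instance of your positive pattern. Second, and as you yourself flag, the entire negative half --- not fiber type for $1<\gcd(n,t)<t$ in the range of (1), and all of (2) --- rests on a dimension estimate $\dim T/(L+HT)\ge n+2$ or on exhibiting a minimal generator of $J$ outside $L+HT$, neither of which you carry out; you do not identify the stratum, the offending bidegree, or the mechanism by which $(9,6)$ escapes. As written, the proposal is a reasonable research program consistent with the paper's evidence, but it proves neither direction of (1) beyond the $\gcd(n,t)=1$ reduction, and none of (2).
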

In \Cref{figure}, the pairs $(n,t)$ covered by first part of this conjecture are marked with dashed circles, and those covered by the second part are marked with dashed squares. Finally, cases that are not covered by anything of the above are unmarked. 


\section*{Acknowledgement}
This project was started when the third author was visiting the first and the second author at the University of Duisburg-Essen. The third author thanks the local research group for their hospitality, and would like to acknowledge International Exchange and Overseas Study Scholarship of Soochow University which allowed her to perform this trip.

The first author was supported by a fellowship from the Wenner-Gren Foundations (grant WGF2022-0052).

\vspace{3mm}
\noindent \textbf{Data availability:} The data used to support the findings of this study are included within the article.

\vspace{3mm}
\noindent \textbf{Statement:} On behalf of all authors, the corresponding author states that there is no conflict of interest.

\section*{Appendix}
\emph{\Cref{GB1}.} Recall that $L=(f_1,\ldots,f_{n-1},g_1,\ldots,g_{\lfloor\frac{n}{2}\rfloor}),$
where 
$$f_j=\underline{x_{j-2}y_j}-x_jy_{j+1}, \quad g_k=\underline{{x_{2k-2}\prod_{\substack{i\in [0,2k)\\i \text{ odd}}}y_i}}-x_{n-2}\prod_{\substack{i\in [0,2k)\\i \text{ even}}}y_i.$$

These generators form a Gröbner basis of $L$ with respect to any monomial order $<$ on $T$ which is the product order of $<_1$ and $<_2$, where $<_1$ is the reverse lexicographic order on $R$ induced by $y_1>y_2>\cdots > y_{n-1}>y_0$, and $<_2$ is an arbitrary monomial order on $S$.

\begin{proof}[Proof of \Cref{GB1}]

The proof will be performed in three steps.

Step 1: \emph{The $S-$pairs $S(g_{k_1},g_{k_2})$ all reduce to $0$.}
Let $1\le k-s<k\le \lfloor\frac{n}{2}\rfloor$. We would like to show that $S(g_k, g_{k-s})$ reduces to $0$. 
We have: 
\begin{align*}
&S(g_k,g_{k-s})\\&=S\left(\underline{x_{2k-2}\prod_{\substack{i\in [0,2k)\\i \text{ odd}}}y_i}-x_{n-2}\prod_{\substack{i\in [0,2k)\\i \text{ even}}}y_i,\quad \underline{x_{2k-2s-2}\prod_{\substack{i\in [0,2k-2s)\\i \text{ odd}}}y_i}-x_{n-2}\prod_{\substack{i\in [0,2k-2s)\\i \text{ even}}}y_i\right)\\
&=g_kx_{2k-2s-2}-g_{k-s}x_{2k-2}\prod_{\substack{i\in [2k-2s,2k)\\i \text{ odd}}}y_i\\
&=x_{n-2}x_{2k-2}\prod_{\substack{i\in [2k-2s,2k)\\i \text{ odd}}}y_i \prod_{\substack{i\in [0,2k-2s)\\i \text{ even}}}y_i-x_{n-2}x_{2k-2s-2}\prod_{\substack{i\in [0,2k)\\i \text{ even}}}y_i\\
&=x_{n-2}\prod_{\substack{i\in [0,2k-2s)\\i \text{ even}}}y_i\left(x_{2k-2}\prod_{\substack{i\in [2k-2s,2k)\\i \text{ odd}}}y_i-x_{2k-2s-2}\prod_{\substack{i\in [2k-2s,2k)\\i \text{ even}}}y_i\right).
\end{align*}
We will prove by induction on $s$ that the binomial in parentheses reduces to $0$. Since $s\ge 1$, the interval $[2k-2s,2k)$ is nonempty and we take out $y_{2k-2s+1}$ and $y_{2k-2s}$ and rewrite the binomial in parentheses as follows:
\begin{align*}
x_{2k-2}y_{2k-2s+1}\prod_{\substack{i\in [2k-2s+2,2k)\\i \text{ odd}}}y_i-\underline{x_{2k-2s-2}y_{2k-2s}\prod_{\substack{i\in [2k-2s+2,2k)\\i \text{ even}}}y_i}.    
\end{align*}
Since $k-s\ge 1$, $2k\le n$ and $s\ge 1$, we have $2\le 2k-2s\le n-2$, and thus $2k-2s$ lies inside the range where $f_i$'s are defined. Therefore, one can say that $x_{2k-2s-2}y_{2k-2s}$ is the leading term of $f_{2k-2s}=\underline{x_{2k-2s-2}y_{2k-2s}}-x_{2k-2s}y_{2k-2s+1}$. Therefore, reducing the leading term of our binomial using $f_{2k-2s}$, we obtain:
\begin{align*}
&x_{2k-2}y_{2k-2s+1}\prod_{\substack{i\in [2k-2s+2,2k)\\i \text{ odd}}}y_i-x_{2k-2s}y_{2k-2s+1}\prod_{\substack{i\in [2k-2s+2,2k)\\i \text{ even}}}y_i\\=&y_{2k-2s+1}\left(x_{2k-2}\prod_{\substack{i\in [2k-2(s-1),2k)\\i \text{ odd}}}y_i-x_{2k-2(s-1)-2}\prod_{\substack{i\in [2k-2(s-1),2k)\\i \text{ even}}}y_i\right),    
\end{align*}
and the binomial in parentheses is exactly the one we wanted to reduce, with $s\to s-1$. Note that if $s=1$ it equals $0$, which settles the base case.

Step 2: \emph{The $S-$pairs $S(f_j,g_k)$ all reduce to $0$.} Recall that
$$f_j=\underline{x_{j-2}y_j}-x_jy_{j+1}, \quad g_k=\underline{x_{2k-2}\prod_{\substack{i\in [0,2k)\\i \text{ odd}}}y_i}-x_{n-2}\prod_{\substack{i\in [0,2k)\\i \text{ even}}}y_i.$$ We need to consider only the cases when the leading terms of these binomials are not coprime. This can happen in the following two cases: 

Case 1: The two leading terms are not coprime because of a common $x-$factor, in other words, if $j=2k$. Note that this case is impossible if $2k=n$ since $j$ only takes values until $n-1$.
Then we have 
\begin{align*}
S(f_{2k},g_{k})&=S\left(\underline{x_{2k-2}y_{2k}}-x_{2k}y_{2k+1},\quad\underline{x_{2k-2}\prod_{\substack{i\in [0,2k)\\i \text{ odd}}}y_i}-x_{n-2}\prod_{\substack{i\in [0,2k)\\i \text{ even}}}y_i\right)\\
&=f_{2k}\prod_{\substack{i\in [0,2k)\\i \text{ odd}}}y_i-g_ky_{2k}=x_{n-2}y_{2k}\prod_{\substack{i\in [0,2k)\\i \text{ even}}}y_i-x_{2k}y_{2k+1}\prod_{\substack{i\in [0,2k)\\i \text{ odd}}}y_i.
\end{align*}
We recall that the indices of variables should be interpreted modulo $n$, while the intervals should be interpreted in the usual way. For example, if $n=2k+1$, then $y_{2k+1}$ should be interpreted as $y_0$ and therefore in this case we avoid writing for instance $y_{2k+1}\prod_{\substack{i\in [0,2k)\\i \text{ odd}}}y_i=\prod_{\substack{i\in [0,2k+2)\\i \text{ odd}}}y_i$ in order to avoid the confusion of $y_0$ being present in the product of $y_i$ with odd indices. That is why all the intervals we use in products have to be subsets of $[0,n)$. 

In order to further reduce this binomial, we need to consider two final subcases:

Subcase 1.a: $2k=n-1$. In this case our binomial becomes
$$x_{n-2}y_{n-1}\prod_{\substack{i\in [0,n-1)\\i \text{ even}}}y_i-\underline{x_{n-1}y_{0}\prod_{\substack{i\in [0,n-1)\\i \text{ odd}}}y_i}.$$
We will show that the leading term reduces to the other term:
\begin{align*}
x_{n-1}y_{0}\prod_{\substack{i\in [0,n-1)\\i \text{ odd}}}y_i
&=\prod_{\substack{i\in [0,2)\\i \text{ even}}}y_i\cdot (x_{n-1}y_1)\cdot\prod_{\substack{i\in [2,n-1)\\i \text{ odd}}}y_i\\
&\xrightarrow{f_1} \prod_{\substack{i\in [0,2)\\i \text{ even}}}y_i\cdot x_1y_2\cdot\prod_{\substack{i\in [2,n-1)\\i \text{ odd}}}y_i=\prod_{\substack{i\in [0,4)\\i \text{ even}}}y_i\cdot (x_1y_3)\cdot\prod_{\substack{i\in [4,n-1)\\i \text{ odd}}}y_i\\
&\xrightarrow{f_3} \prod_{\substack{i\in [0,4)\\i \text{ even}}}y_i\cdot x_3y_4\cdot\prod_{\substack{i\in [4,n-1)\\i \text{ odd}}}y_i=\prod_{\substack{i\in [0,6)\\i \text{ even}}}y_i\cdot (x_3y_5)\cdot\prod_{\substack{i\in [6,n-1)\\i \text{ odd}}}y_i\\
&\xrightarrow{f_5}\cdots\\
&\xrightarrow{f_{n-2}}\prod_{\substack{i\in [0,n-1)\\i \text{ even}}}y_i\cdot x_{n-2}y_{n-1} =x_{n-2}y_{n-1}\prod_{\substack{i\in [0,n-1)\\i \text{ even}}}y_i.
\end{align*}
In order to see the last reduction step, note that right after the reduction by $f_{2s+1}$ the monomial is $$\prod_{\substack{i\in [0,2s+2)\\i \text{ even}}}y_i\cdot (x_{2s+1}y_{2s+2})\cdot\prod_{\substack{i\in [2s+2,n-1)\\i \text{ odd}}}y_i.$$
We set $s=k-1$ and obtain that after the reduction by $f_{2s+1}=f_{2k-1}=f_{n-2}$ we get
$$\prod_{\substack{i\in [0,n-1)\\i \text{ even}}}y_i\cdot (x_{n-2}y_{n-1})\cdot\prod_{\substack{i\in [n-1,n-1)\\i \text{ odd}}}y_i,$$
which is precisely what we need.

Subcase 1.b: $2k\le n-2$. In this case we have $2k+2\le n$ and so we can use the interval $[0,2k+2)$ to rewrite  our binomial as $$x_{n-2}\prod_{\substack{i\in [0,2k+2)\\i \text{ even}}}y_i-x_{2k}\prod_{\substack{i\in [0,2k+2)\\i \text{ odd}}}y_i.$$
Note that this is simply $-g_{k+1}$. Also note that the indices of $g$'s are only allowed to up until $\lfloor\frac{n}{2}\rfloor$, and so index $k+1$ is allowed since $2(k+1)=2k+2\le n$. This is not the case in the Subcase 1.a, where $g_{k+1}$ merely does not exist.

Case 2: The two leading terms are not coprime because of a common $y-$factor, in other words, if $j$ is an odd number belonging to $[0,2k)$. Then we get 

\begin{align*}
S(f_j,g_k)&=S\left(\underline{x_{j-2}y_j}-x_jy_{j+1}, \quad \underline{x_{2k-2}\prod_{\substack{i\in [0,2k)\\i \text{ odd}}}y_i}-x_{n-2}\prod_{\substack{i\in [0,2k)\\i \text{ even}}}y_i\right)\\
&=f_jx_{2k-2}\prod_{\substack{i\in [0,2k)\setminus\{j\}\\i \text{ odd}}}y_i-g_kx_{j-2}\\
&=x_{j-2}x_{n-2}\prod_{\substack{i\in [0,2k)\\i \text{ even}}}y_i-\underline{x_{2k-2}x_jy_{j+1}\prod_{\substack{i\in [0,2k)\setminus\{j\}\\i \text{ odd}}}y_i}
\end{align*}

Note that the underlined term is indeed the leading term: unless $j+1=n$, we see that $y_0$  is present only in the support of the first term (making it the smaller one). If $j+1=n$, and noting that $j=n-1$ is an odd number belonging to $[0,2k)$, we conclude that $2k=n$. Then the above binomial becomes $x_{n-3}x_{n-2}\prod_{\substack{i\in [0,n)\\i \text{ even}}}y_i-{x_{n-2}x_{n-1}y_0\prod_{\substack{i\in [0,n-2)\\i \text{ odd}}}y_i}$. We see that $y_0$ is present on both sides, then $y_{n-1}$ is not present anywhere, and $y_{n-2}$ is only present in the first term (making it the smaller one). We will show that the leading term reduces to the  other one:
\begin{align*}
  &x_{2k-2}x_jy_{j+1}\prod_{\substack{i\in [0,2k)\setminus\{j\}\\i \text{ odd}}}y_i
  =x_{2k-2}\prod_{\substack{i\in [j+1,j+3)\\i \text{ even}}}y_i\cdot x_jy_{j+2}\cdot\prod_{\substack{i\in [0,2k)\setminus\{j,j+2\}\\i \text{ odd}}}y_i\\
  &\xrightarrow{f_{j+2}} x_{2k-2}\prod_{\substack{i\in [j+1,j+3)\\i \text{ even}}}y_i\cdot x_{j+2}y_{j+3}\cdot\prod_{\substack{i\in [0,2k)\setminus\{j,j+2\}\\i \text{ odd}}}y_i\\
  &=x_{2k-2}\prod_{\substack{i\in [j+1,j+5)\\i \text{ even}}}y_i\cdot x_{j+2}y_{j+4}\cdot\prod_{\substack{i\in [0,2k)\setminus\{j,j+2,j+4\}\\i \text{ odd}}}y_i\\
  &\xrightarrow{f_{j+4}} x_{2k-2}\prod_{\substack{i\in [j+1,j+5)\\i \text{ even}}}y_i\cdot x_{j+4}y_{j+5}\cdot\prod_{\substack{i\in [0,2k)\setminus\{j,j+2,j+4\}\\i \text{ odd}}}y_i\\
  &=x_{2k-2}\prod_{\substack{i\in [j+1,j+7)\\i \text{ even}}}y_i\cdot x_{j+4}y_{j+6}\cdot\prod_{\substack{i\in [0,2k)\setminus\{j,j+2,j+4,j+6\}\\i \text{ odd}}}y_i
  \xrightarrow{\cdots}
\end{align*}
We see that right after the reduction by $j+2s$ the monomial is 
$$x_{2k-2}\prod_{\substack{i\in [j+1,j+2s+1)\\i \text{ even}}}y_i\cdot x_{j+2s}y_{j+2s+1}\cdot\prod_{\substack{i\in [0,2k)\setminus\{j,j+2,\ldots,j+2s\}\\i \text{ odd}}}y_i.$$
We do it as many times as it is needed to remove $2k-1$ from the set of odd numbers. In other words, if we set $s=\frac{2k-1-j}{2}$, then  $j+2s=2k-1$ (which is in the range of the allowed indices for $f$'s) and we get
$$x_{2k-2}\prod_{\substack{i\in [j+1,2k)\\i \text{ even}}}y_i\cdot x_{2k-1}y_{2k}\cdot\prod_{\substack{i\in [0,j-1)\\i \text{ odd}}}y_i.$$

It is not difficult to notice that any monomial of type $x_{i}x_{i+1}y_{i+2}$ (for any $i=0,1,\ldots, n-1$, and indices interpreted modulo $n$) reduces to $x_{n-2}x_{n-1}y_0$. Indeed, 
$$
x_{n-1}x_{0}y_1\xrightarrow{f_1}x_0x_1y_2\xrightarrow{f_2}x_1x_2y_3\xrightarrow{\ldots}\ldots\xrightarrow{f_{n-2}}x_{n-3}x_{n-2}y_{n-1}\xrightarrow{f_{n-1}}x_{n-2}x_{n-1}y_{0}.
$$
Therefore, in the monomial above we may replace $x_{2k-2}x_{2k-1}y_{2k}$ with $x_{n-2}x_{n-1}y_{0}$ and get
\begin{align*}
    x_{n-2}\prod_{\substack{i\in [j+1,2k)\\i \text{ even}}}y_i\cdot\left[ x_{n-1}y_{0}\cdot\prod_{\substack{i\in [0,j-1)\\i \text{ odd}}}y_i\right]
\end{align*}
Note that the monomial in square brackets is almost exactly the same as the leading term in Subcase~1.a, with the only difference that here the product is taken over a shorter interval. The reduction process of this monomial will be exactly the same as before, but with fewer steps. Analogously to Subcase~1.a, we do reductions with $f_1,f_3,\ldots$, and we
 see that right after the reduction by $f_{2s+1}$ the monomial in square brackets is $$\prod_{\substack{i\in [0,2s+2)\\i \text{ even}}}y_i\cdot (x_{2s+1}y_{2s+2})\cdot\prod_{\substack{i\in [2s+2,j-1)\\i \text{ odd}}}y_i.$$
if $j=1$, this monomial already equals the one we want to reduce to and we are done. Otherwise  we set $s=\frac{j-3}{2}$ and obtain that after the reduction by $f_{2s+1}=f_{j-2}$ we get

$$\prod_{\substack{i\in [0,j-1)\\i \text{ even}}}y_i\cdot (x_{j-2}y_{j-1})\cdot\prod_{\substack{i\in [j-1,j-1)\\i \text{ odd}}}y_i=\prod_{\substack{i\in [0,j+1)\\i \text{ even}}}y_i\cdot x_{j-2}.$$
Now that we have reduced the part in square brackets, the entire monomial then reduces to 
$$
x_{n-2}\prod_{\substack{i\in [j+1,2k)\\i \text{ even}}}y_i\prod_{\substack{i\in [0,j+1)\\i \text{ even}}}y_i\cdot x_{j-2}=x_{j-2}x_{n-2}\prod_{\substack{i\in [0,2k)\\i \text{ even}}}y_i,
$$
which is precisely what we need.

Step 3: \emph{The $S-$pairs $S(f_i,f_j)$ all reduce to $0$.}
This is obvious since the leading terms of $f_i$ and $f_j$ are coprime for $i\not=j$.

\end{proof}

\emph{\Cref{GBfibers}.}
Let $n$ be even. 
Then the polynomials $\{f_1,\ldots,f_{n-1},g_1,\ldots,g_{\frac{n}{2}},h\}$ form a Gröbner basis for $L+HT$ with respect to any monomial order given in \Cref{GB1}. Here, as before, $H=(h)$, where $$h=\underline{\prod_{\substack{i\in [0,n)\\i \text{ odd}}}y_i}-\prod_{\substack{i\in [0,n)\\i \text{ even}}}y_i.$$

\begin{proof}[Proof of \Cref{GBfibers}]
Let $n=2s$. We only need to check that the $S$-pairs involving $h$ reduce to $0$. Note that $g_s=x_{n-2}h$, and so the $S$-pairs involving $h$ will reduce very similarly to those involving $g_s$. We will briefly go through all the cases, essentially mimicking the proof of \Cref{GB1}.

 Step 1: \emph{The $S-$pairs $S(h, g_k)$ all reduce to $0$.}
 If $k=s$, there is nothing to check, so we will assume $k=s-a$, $a\ge 1$. It is not hard to verify that $S(g_s,g_{s-a})=x_{n-2}S(h,g_{s-a})$. 
 


In Step~1 in the proof of \Cref{GB1} we computed all $S(g_k,g_{k-a})$, the result was divisible by $x_{n-2}$, which we factored out, and the remaining binomial reduced to $0$. Therefore, with the exact same reduction process $S(h,g_{s-a})$ will reduce to $0$.

Step 2: \emph{The $S-$pairs $S(f_j, h)$ all reduce to $0$.}
We will mimic the proof of Step~2 of \Cref{GB1}.
Recall that
$$f_j=\underline{x_{j-2}y_j}-x_jy_{j+1}, \quad h=\underline{\prod_{\substack{i\in [0,n)\\i \text{ odd}}}y_i}-\prod_{\substack{i\in [0,n)\\i \text{ even}}}y_i.$$ Again, we need to consider only the cases when the leading terms of these binomials are not coprime. They cannot have a common $x$-factor, so Case~1 of Step~2 of \Cref{GB1} cannot happen here. If the two leading terms are not coprime, it has to be because of a common $y$-factor, and this brings us to Case~2 of Step~2 of \Cref{GB1}. Then, as in the proof of \Cref{GB1}, $j$ needs to be an odd number, in our case lying in $[0,n)$.  We once again see that $S(f_j,g_s)=x_{n-2}S(f_j,h)$. Following the proof of Case~2 of Step~2 of \Cref{GB1} (with $n=2s$ and $k=s$), we see that $x_{2k-2}$ (which now becomes $x_{n-2}$) is not involved in any reduction except in the step where $x_{2k-2}x_{2k-1}y_{2k}$ reduces $x_{n-2}x_{n-1}y_0$. In our case, however, this is actually no reduction since $n=2k$. Therefore, performing the same chain of reductions we will reduce $S(f_j,h)$ to $0$.
\end{proof}

\emph{\Cref{shan}.}
Let $<$ be the product order given in \Cref{GB1}.
Then the polynomials $$f_1, \ldots, f_{n-1},g_1, \ldots, g_{\frac{n}{2}-1}, h_1, \ldots, h_{\frac{n}{2}-1}$$ form a Gröbner basis for $L+HT$. Here $f_j=\underline{x_{\frac{n}{2}+j}y_j}-x_jy_{j+1}$,
$$g_k=\underline{y_k\prod_{i=0}^{k-1}x_i}-y_0\prod_{i=\frac{n}{2}}^{k-1+\frac{n}{2}}x_i,$$ 
and $h_l=\underline{y_ly_{l+\frac{n}{2}}}-y_0y_{\frac{n}{2}}$. As before, $L=(f_1,\ldots, f_{n-1},g_1,\ldots, g_{\frac{n}{2}-1})$ and $H=(h_1,\ldots,h_{\frac{n}{2}-1})$.

\begin{proof}[Proof of \Cref{shan}.]
The proof  will be divided into the following steps.

Step 1: \emph{The $S-$pairs $S(f_{j_1},f_{j_2})$ and $S(h_{l_1},h_{l_2})$ all reduce to $0$.}
This is obvious since the leading terms of $f_{j_1}$ and $f_{j_2}$ (resp. $h_{l_1}$ and $h_{l_2}$) are coprime for $j_1\not=j_2$ (resp. $l_1\not=l_2$).

Step 2: \emph{The $S-$pairs $S(g_{k_1},g_{k_2})$ all reduce to $0$.}
Let $1\leq k-s<k\leq\frac{n}{2}-1$. We would like to show that $S(g_k, g_{k-s})$ reduces to $0$. 
\begin{align*}
S(g_k,g_{k-s})&=S\left(\underline{y_k\prod_{i=0}^{k-1}x_i}-y_0\prod_{i=\frac{n}{2}}^{k-1+\frac{n}{2}}x_i,\quad\underline{y_{k-s}\prod_{i=0}^{k-s-1}x_i}-y_0\prod_{i=\frac{n}{2}}^{k-s-1+\frac{n}{2}}x_i\right)\\
&=g_k y_{k-s}-g_{k-s} y_k\prod_{i=k-s}^{k-1}x_i\\
&=y_0 y_k\prod_{i=k-s}^{k-1}x_i\prod_{i=\frac{n}{2}}^{k-s-1+\frac{n}{2}}x_i-y_0 y_{k-s}\prod_{i=\frac{n}{2}}^{k-1+\frac{n}{2}}x_i\\
&=y_0 \prod_{i=\frac{n}{2}}^{k-s-1+\frac{n}{2}}x_i\left(y_k\prod_{i=k-s}^{k-1}x_i-\underline{y_{k-s}\prod_{i=k-s+\frac{n}{2}}^{k-1+\frac{n}{2}}x_i}\right).
\end{align*}

Therefore, reducing the leading term of the binomial in parentheses using $f_{k-s},f_{k-s+1},\ldots f_{k-1}$, we obtain:
\begin{align*}
&y_{k-s}\prod_{i=k-s+\frac{n}{2}}^{k-1+\frac{n}{2}}x_i=\left(y_{k-s}x_{k-s+\frac{n}{2}}\right)\prod_{i=k-s+1+\frac{n}{2}}^{k-1+\frac{n}{2}}x_i\xrightarrow{f_{k-s}}x_{k-s}y_{k-s+1}\prod_{i=k-s+1+\frac{n}{2}}^{k-1+\frac{n}{2}}x_i\\&=x_{k-s}\left(y_{k-s+1}x_{k-s+\frac{n}{2}+1}\right)\prod_{i=k-s+2+\frac{n}{2}}^{k-1+\frac{n}{2}}x_i\xrightarrow{f_{k-s+1}}\ldots \xrightarrow{f_{k-1}}x_{k-s}\cdots x_{k-1}y_k=y_k\prod_{i=k-s}^{k-1}x_i.  
\end{align*}
Alternatively, one can do one reduction step (by $f_{k-s}$) and notice that the obtained monomial is $x_{k-s}$ times the monomial we are reducing, with $s\to s-1$. The rest follows easily by induction.
Therefore, $S(g_k, g_{k-s})$ reduces to $0$.

Step 3: \emph{The $S-$pairs $S(f_j, g_k)$ all reduce to $0$.}
Recall that $f_j=\underline{x_{\frac{n}{2}+j}y_j}-x_jy_{j+1}$ and
$g_k=\underline{y_k\prod_{i=0}^{k-1}x_i}-y_0\prod_{i=\frac{n}{2}}^{k-1+\frac{n}{2}}x_i$. 
We need to consider only the cases when the leading terms of these binomials are not coprime. This can happen in the following two cases:

Case 1: The two leading terms are not coprime because of a common $x-$factor, in other words, 
 $0\leq j+\frac{n}{2}\leq k-1$. Then we have:
    \begin{align*}
        S(f_j,g_k)&=S\left(\underline{x_{\frac{n}{2}+j}y_j}-x_jy_{j+1},\quad\underline{y_k\prod_{i=0}^{k-1}x_i}-y_0\prod_{i=\frac{n}{2}}^{k-1+\frac{n}{2}}x_i\right)\\
        &=f_k\cdot y_k\prod_{i=0}^{\frac{n}{2}+j-1}x_i\prod_{i=\frac{n}{2}+j+1}^{k-1}x_i-g_k\cdot y_j\\
        &=x_j\left(y_0y_j\prod_{i=\frac{n}{2}}^{j-1}x_i \prod_{i=j+1}^{k-1+\frac{n}{2}}x_i-\underline{y_ky_{j+1}\prod_{i=0}^{\frac{n}{2}+j-1}x_i\prod_{i=\frac{n}{2}+j+1}^{k-1}x_i}\right)
        \end{align*}
        Note that the underlined monomial is indeed the leading term of this binomial. In order to see this, it is enough to show that the underlined term is not divisible by $y_0$. Clearly, $k\not=0$ given the range where $g_k$'s are defined. If we assume $j+1=0$, then $0\le j+\frac{n}{2}\le k-1$ implies $0\le \frac{n}{2}-1\le k-1$, which is again a contradiction with the range where $g_k$'s are defined. We will first reduce a divisor of the leading term as follows:
        \begin{align*}
        y_ky_{j+1}\prod_{i=\frac{n}{2}+j+1}^{k-1}x_i&\xrightarrow{f_{j+1}}x_{j+1}y_ky_{j+2}\prod_{i=\frac{n}{2}+j+2}^{k-1}x_i\xrightarrow{f_{j+2}}x_{j+1}x_{j+2}y_ky_{j+3}\prod_{i=\frac{n}{2}+j+3}^{k-1}x_i\\
        &\xrightarrow{f_{j+3}}\cdots \xrightarrow{f_{k-1+\frac{n}{2}}}y_ky_{k+\frac{n}{2}}\prod_{i=j+1}^{k-1+\frac{n}{2}}x_i\xrightarrow{h_k}y_0y_{\frac{n}{2}}\prod_{i=j+1}^{k-1+\frac{n}{2}}x_i.
    \end{align*} 
Therefore, we have reduced the leading term in parentheses to $y_0y_{\frac{n}{2}}\prod_{i=j+1}^{k-1+\frac{n}{2}}x_i\prod_{i=0}^{\frac{n}{2}+j-1}x_i$. It is now sufficient to show that  $y_{\frac{n}{2}}\prod_{i=0}^{\frac{n}{2}+j-1}x_i$ can be reduced to $y_j\prod_{i=\frac{n}{2}}^{j-1}x_i$. But this is true since
\begin{align*}
y_{\frac{n}{2}}\prod_{i=0}^{\frac{n}{2}+j-1}x_i&\xrightarrow{f_{\frac{n}{2}}}x_{\frac{n}{2}}y_{\frac{n}{2}+1}\prod_{i=1}^{\frac{n}{2}+j-1}x_i\xrightarrow{f_{\frac{n}{2}+1}}x_{\frac{n}{2}}x_{\frac{n}{2}+1}y_{\frac{n}{2}+2}\prod_{i=2}^{\frac{n}{2}+j-1}x_i\\&\xrightarrow{f_{\frac{n}{2}+2}}\cdots\xrightarrow{f_{j-1}}y_j\prod_{i=\frac{n}{2}}^{j-1}x_i.
\end{align*}
   Therefore, $S(f_j,g_k)$ reduce to $0$ in this case.

Case 2: The two leading terms are not coprime because of a common $y-$factor, in other words, $j=k\in \{1,\ldots, \frac{n}{2}-1\}$. In this case we have:
\begin{align*}
S(f_k,g_k)&=S\left(\underline{x_{\frac{n}{2}+k}y_k}-x_ky_{k+1}, \quad\underline{y_k\prod_{i=0}^{k-1}x_i}-y_0\prod_{i=\frac{n}{2}}^{k-1+\frac{n}{2}}x_i\right)\\
&=f_k\cdot \prod_{i=0}^{k-1} x_i-g_k\cdot x_{\frac{n}{2}+k}=y_0\prod_{i=\frac{n}{2}}^{k+\frac{n}{2}}x_i-\underline{y_{k+1}\prod_{i=0}^k x_i}.
\end{align*} 
Note that this binomial equals $-g_{k+1}$, unless $k=\frac{n}{2}-1$. If $k=\frac{n}{2}-1$, we have:

\begin{align*}
y_{\frac{n}{2}}\prod_{i=0}^{\frac{n}{2}-1} x_i&=x_0y_{\frac{n}{2}}\prod_{i=1}^{\frac{n}{2}-1}x_i\xrightarrow{f_{\frac{n}{2}}}x_{\frac{n}{2}}y_{\frac{n}{2}+1}\prod_{i=1}^{\frac{n}{2}-1}x_i=x_{\frac{n}{2}}(x_1y_{\frac{n}{2}+1})\prod_{i=2}^{\frac{n}{2}-1}x_i\\&\xrightarrow{f_{\frac{n}{2}+1}}x_{\frac{n}{2}}x_{\frac{n}{2}+1}y_{\frac{n}{2}+2}\prod_{i=2}^{\frac{n}{2}-1}x_i=x_{\frac{n}{2}}x_{\frac{n}{2}+1}(x_2y_{\frac{n}{2}+2})\prod_{i=3}^{\frac{n}{2}-1}x_i\\&\xrightarrow{f_{\frac{n}{2}+2}}
\ldots \xrightarrow{f_{n-1}}x_{\frac{n}{2}}x_{\frac{n}{2}+1}\cdots x_{n-1}y_0=y_0\prod_{i=\frac{n}{2}}^{n-1}x_i,
\end{align*}

which is exactly what we need.
Therefore, $S(f_j,g_k)$ all reduce to $0$.

Step 4: \emph{The $S-$pairs $S(g_k,h_l)$ all reduce to $0$.}
Notice that the leading terms of $g_k$ and $h_l$ are coprime except $k=l$. In this case we have:

\begin{align*}
S(g_l,h_l)&=S\left(\underline{y_l\prod_{i=0}^{l-1}x_i}-y_0\prod_{i=\frac{n}{2}}^{l-1+\frac{n}{2}}x_i,\quad\underline{y_l y_{l+\frac{n}{2}}}-y_0y_{\frac{n}{2}}\right)\\
&=g_l y_{l+\frac{n}{2}}-h_l \prod_{i=0}^{l-1}x_i
=y_0\left(\underline{y_{\frac{n}{2}}\prod_{i=0}^{l-1}x_i}-y_{l+\frac{n}{2}} \prod_{i=\frac{n}{2}}^{l-1+\frac{n}{2}}x_i\right).
\end{align*} 

Note that the leading term of the binomial in parentheses is very similar to the leading term of the $S$-pair in the previous Step~3 (the case $k=\frac{n}{2}-1$). We do the exact same reduction of this monomial, but with fewer steps. We use $f_{\frac{n}{2}},f_{\frac{n}{2}+1},\ldots,f_{\frac{n}{2}+l-1}$ and from Step~3 we see that after these reductions we will obtain $x_{\frac{n}{2}}\cdots x_{\frac{n}{2}+l-1}y_{\frac{n}{2}+l}=y_{l+\frac{n}{2}}\prod_{i=\frac{n}{2}}^{l-1+\frac{n}{2}}x_i$, which is exactly what we need. We conclude that all $S(g_k,h_l)$ reduce to $0$.

Step 5: \emph{The $S-$pairs $S(f_j,h_l)$ all reduce to $0$.}
Notice that the leading terms of $f_j$ and $h_l$ are coprime except the case $j=l\in\{1,\ldots, \frac{n}{2}-1\}$ and the case $l\in\{1,\ldots, \frac{n}{2}-1\}$, $j=l+\frac{n}{2}$. In the former case we have:
\begin{align*}
S(f_l,h_l)&=S\left(\underline{x_{\frac{n}{2}+l}y_l}-x_ly_{l+1}, \quad\underline{y_ly_{l+\frac{n}{2}}}-y_0y_{\frac{n}{2}}\right)=f_ly_{l+\frac{n}{2}}-h_lx_{\frac{n}{2}+l}\\&=x_{\frac{n}{2}+l}y_0y_{\frac{n}{2}}-\underline{x_ly_{l+1}y_{l+\frac{n}{2}}}
\xrightarrow{f_{l+\frac{n}{2}}}x_{\frac{n}{2}+l}y_0y_{\frac{n}{2}}-x_{l+\frac{n}{2}}y_{l+1+\frac{n}{2}}y_{l+1}
=-x_{\frac{n}{2}+l}h_{l+1}.
\end{align*}

Note that the last equality makes sense unless $l=\frac{n}{2}-1$, in which case $h_{l+1}$ is not defined. However, in this case the left hand side of this equality equals $0$.
Similarly, in the latter case $S(f_{l+\frac{n}{2}},h_l)$ reduces to 
$x_lh_{l+1}$ using $f_l$ (and the same remark about $l=\frac{n}{2}-1$ applies). Therefore, $S(f_j,h_l)$ all reduce to $0$.

\end{proof}

\vspace{3mm}
\textsc{Oleksandra Gasanova, Fakultät für Mathematik, Universität Duisburg-Essen, 45117 Essen, Germany}

\textit{Email address:} \texttt{oleksandra.gasanova@uni-due.de}

\textsc{Jürgen Herzog, Fakultät für Mathematik, Universität Duisburg-Essen, 45117 Essen, Germany}

\textit{Email address:} \texttt{juergen.herzog@uni-essen.de}

\textsc{Jiawen Shan, School of Mathematical Sciences, Soochow University, 215006 Suzhou, P. R. China}

\textit{Email address:} \texttt{ShanJiawen826@outlook.com}

\end{document}